\documentclass[review,onefignum,onetabnum]{siamart190516}



\usepackage{lipsum}
\usepackage{amsfonts}
\usepackage{graphicx}
\usepackage{epstopdf}
\usepackage{algorithmic}
\usepackage{amsmath}
\usepackage{amssymb}
\usepackage{bm}
\usepackage{float}
\usepackage{multirow}
\usepackage{color}
\usepackage{booktabs}
\usepackage{multirow}
\usepackage{graphicx}
\usepackage{subfigure}
\usepackage{diagbox}
\usepackage{url}
\usepackage{verbatim}
\ifpdf
  \DeclareGraphicsExtensions{.eps,.pdf,.png,.jpg}
\else
  \DeclareGraphicsExtensions{.eps}
\fi


\newsiamremark{remark}{Remark}
\newsiamremark{hypothesis}{Hypothesis}
\crefname{hypothesis}{Hypothesis}{Hypotheses}
\newsiamthm{claim}{Claim}
\newsiamthm{example}{Example}
\newsiamthm{fact}{Fact}

\headers{Convergence Rate Analysis of AFBA with GN Momentum}{Y. Lin, S. Li and Y. Zhang}

\title{Convergence Rate Analysis of Accelerated Forward-Backward Algorithm with Generalized Nesterov Momentum Scheme\thanks{Submitted to the editors \today.}
\funding{This work is supported in part by Fundamental Research Funds for the Central Universities of China under Grant 11620352, by the Opening Project of Guangdong Province Key Laboratory of Computational Science at the Sun Yat-sen University under Grants 2021006 and 2021007, by Natural Science Foundation of China under Grants 11771464 and 11601537, and by the Science and Technology Program of Guangzhou under grant 201804020053.}}

\author{Yizun Lin\thanks{Department of Mathematics, College of Information Science and Technology, Jinan University, Guangzhou 510632, China (\email{linyizun@jnu.edu.cn}).}
\and Si Li\thanks{Corresponding author. School of Computer Science and Technology, Guangdong University of Technology, Guangzhou 510006, China (\email{sili@gdut.edu.cn}).}
\and Yunzhong Zhang\thanks{Jinan University-University of Birmingham Joint Institute, Jinan University, Guangzhou 511443, China (\email{zhangyunzhong1999@163.com}).}
}

\usepackage{amsopn}
\DeclareMathOperator*{\argmin}{argmin}

\def \bR {\mathbb R}
\def \bN {\mathbb N}

\def \mI {\mathcal{I}}
\def \mK {\mathcal{K}}
\def \mL {\mathcal{L}}

\def \mT {\mathcal{T}}
\def \mN {\mathcal{N}}

\def \bRn {\bR^{n}}
\def \bRm {\bR^{m}}

\def \sign {\mathrm{sign}}
\def \prox {\mathrm{prox}}
\def \diag {\mathrm{diag}}


\ifpdf
\hypersetup{
  pdftitle={An Example Article},
  pdfauthor={D. Doe, P. T. Frank, and J. E. Smith}
  frenchlinks={false}
}
\fi

\begin{document}

\maketitle

\begin{abstract}
Nesterov's accelerated forward-backward algorithm (AFBA) is an efficient algorithm for solving a class of two-term convex optimization models consisting of a differentiable function with a Lipschitz continuous gradient plus a nondifferentiable function with a closed form of its proximity operator. It has been shown that the iterative sequence generated by AFBA with a modified Nesterov's momentum scheme converges to a minimizer of the objective function with an $o\left(\frac{1}{k^2}\right)$ convergence rate in terms of the function value (FV-convergence rate) and an $o\left(\frac{1}{k}\right)$ convergence rate in terms of the distance between consecutive iterates (DCI-convergence rate). In this paper, we propose a more general momentum scheme with an introduced power parameter $\omega\in(0,1]$ and show that AFBA with the proposed momentum scheme converges to a minimizer of the objective function with an $o\left(\frac{1}{k^{2\omega}}\right)$
FV-convergence rate and an $o\left(\frac{1}{k^{\omega}}\right)$ DCI-convergence rate. The generality of the proposed momentum scheme provides us a variety of parameter selections for different scenarios, which makes the resulting algorithm more flexible to achieve better performance. We then employ AFBA with the proposed momentum scheme to solve the smoothed hinge loss $\ell_1$-support vector machine model. Numerical results demonstrate that the proposed generalized momentum scheme outperforms two existing momentum schemes.

\end{abstract}
\begin{keywords}
  Nesterov's  momentum, forward-backward algorithm, convergence rate, support vector machine
\end{keywords}
%
\begin{AMS}
  49M37, 65K05, 90C25
\end{AMS}

\section{Introduction}
In this paper, we consider fast algorithm with a generalized Nesterov momentum scheme for solving a class of two-term optimization problems of the form
\begin{equation}\label{model1}
\min_{x\in\bRn}\{f(x)+g(x)\},
\end{equation}
where $f:\bRn\to\bR$ is a convex and differetiable function with a Lipschitz continuous gradient, $g:\bRn\to\bR$ is a proper lower-semicontinuous convex function which may not be differentiable. This two-term optimization model has important applications in machine learning (e.g. LASSO regression, support vector machine) \cite{tibshirani1996regression, zhang2001text, zhu20041}, image processing (e.g. image denoising, image restoration) \cite{daubechies2004iterative, elad2006image, figueiredo2003algorithm}, compressed sensing \cite{figueiredo2007gradient, yin2008bregman} and so on.

The possible nondifferentiability of $g$ in model \eqref{model1} precludes the use of classical gradient type algorithms. Under these circumstances, the forward-backward algorithm (FBA) \cite{lions1979splitting, parikh2014proximal} was developed to solve the model when the proximity operator of $g$ has a closed-form. The FBA is easily-implemented and robust. However, for large scale ill-conditioned problems, it has been shown to be too slow no matter in practice or in the sense of asymptotic rate of convergence \cite{beck2009fast,bredies2008linear}. To address this issue, various modifications of FBA have been developed \cite{beck2009fast,bertsekas2011incremental,combettes2016stochastic}. One of the most popular strategies is the utilization of momentum technique, such as Nesterov's momentum scheme \cite{nesterov1983method}. Beck and Teboulle showed that FBA has an $O\left(\frac{1}{k}\right)$ convergence rate in terms of the function value (FV-convergence rate), and FBA with Nesterov's momentum (fast iterative shrinkage-thresholding algorithm, FISTA) can improve the FV-convergence rate to $O(\frac{1}{k^2})$. However, the convergence of the iterative sequence generated by FISTA is unclear in their work \cite{beck2009fast}. Chambolle and Dossal proved in \cite{chambolle2015convergence} not only the $O(\frac{1}{k^2})$ FV-convergence rate but also the convergence of the iterative sequence for the momentum accelerated forward-backward algorithm with a new setting of momentum parameters (AFBA-CD). Later, Attouch and Peypouquet showed that AFBA-CD can actually achieve an $o\left(\frac{1}{k^2}\right)$ FV-convergence rate and an $o\left(\frac{1}{k}\right)$ convergence rate in terms of the distance between consecutive iterates (DCI-convergence rate) \cite{attouch2016rate}. Although AFBA-CD is theoretically guaranteed to be faster than FISTA, it does not always give a distinguishingly improved performance on practical applications.

In this work, we propose a more general setting of momentum parameters in the accelerated forward-backward algorithm (AFBA). A power parameter $\omega\in(0,1]$ is introduced in our momentum scheme. We shall show that the setting of momentum parameters in \cite{chambolle2015convergence} is a special case of the proposed generalized scheme with $\omega=1$. More importantly, the iterative sequence generated by AFBA with the generalized momentum scheme converges to a minimizer of the objective function with an $o\left(\frac{1}{k^{2\omega}}\right)$ FV-convergence rate and an $o\left(\frac{1}{k^\omega}\right)$ DCI-convergence rate. This result provides a wider class of momentum algorithms with various convergence rates. Numerical results demonstrate that the proposed momentum scheme outperforms the existing momentum schemes used in \cite{beck2009fast} and \cite{chambolle2015convergence} for classification problems using support vector machine (SVM).

We organize this paper in six sections. In section \ref{sec_AFBA}, we describe the accelerated forward backward algorithm and three types of momentum schemes, including two existing schemes and the proposed generalized scheme. We analyze in section \ref{mainsec} the convergence of the iterative sequence and both the FV-convergence rate and the DCI-convergence rate for AFBA with the proposed momentum scheme. In section \ref{sec_AFBAforSVM}, we formulate the smoothed hinge loss $\ell_1$-SVM model as the two-term optimization model \eqref{model1}, and then employ AFBA to solve this model. Section \ref{sec_numerical} presents the numerical results for comparison of the proposed momentum scheme with the other two schemes mentioned in section \ref{sec_AFBA}. Section \ref{sec_conclusion} offers a conclusion.

\section{Accelerated forward-backward algorithm}\label{sec_AFBA}
In this section, we first review the accelerated forward-backward algorithm (AFBA) for solving Model \eqref{model1} and two existing momentum schemes. Inspired by these two schemes, we then propose a more general setting of momentum parameters. To better describe the iteration scheme of AFBA, we recall the definition of proximity operator of a convex function \cite{moreau1965proximite}. For $x,y\in\bRn$, the inner product is defined by $\langle x,y\rangle:=\sum_{i=1}^{n}x_iy_i$, and the corresponding $\ell_2$ norm is given by $\|x\|:=\langle x,x\rangle^\frac{1}{2}$.
\begin{definition}
Let $\psi:\bRn\to\bR\cup\{+\infty\}$ be a proper convex function. The proximity operator of $\psi$ at $x\in\bRn$ is defined by
\begin{equation}
\prox_\psi(x):=\argmin_{u\in\bRn}\left\{\frac{1}{2}\|u-x\|^2+\psi(u)\right\}.
\end{equation}
\end{definition}


Throughout this paper, we define $F:=f+g$ and
\begin{equation}\label{def_operT}
T:=\prox_{\beta g}\circ(\mI-\beta\nabla f),
\end{equation}
where $\mI$ denotes the identity operator. We will always assume that the minimizer of $F$ exists and let $\beta\in\left(0,\frac{1}{L}\right]$. We say that an operator $\mT:\bRn\to\bRn$ is nonexpansive if $\|\mT x-\mT y\|\leq\|x-y\|$ for all $x,y\in\bRn$. If there exist $\alpha\in(0,1)$ and a nonexpansive operator $\mN:\bRn\to\bRn$ such that $\mT=(1-\alpha)\mI+\alpha\mN$, then we say that $\mT$ is $\alpha$-averaged nonexpansive. According to the proof of Theorem 26.14 in \cite{bauschke2017convex}, we know that operator $T$ in \eqref{def_operT} is averaged nonexpansive, which implies that the sequence generated by the fixed-point iteration $x^{k+1}=Tx^{k}$ (forward-backward iteration) converges to a fixed point of $T$ (see Proposition 5.16 in \cite{bauschke2017convex}). In addition, by employing Fermat's rule (Theorem 16.3 of \cite{bauschke2017convex}) and Proposition 2.6 of \cite{micchelli2011proximity}, we have the following equivalence between the fixed point of $T$ and the minimizer of $F$.
\begin{proposition}[\cite{parikh2014proximal}]\label{thm_minequiFP}
Vector $x^*\in\bRn$ is a minimizer of $F$ if and only if $x^*$ is a fixed point of $T$.
\end{proposition}

To sum up, the sequence generated by the fixed-point iteration of $T$ converges to a minimizer of $F$. Based on this fixed-point iteration, an accelerated version with momentum technique (AFBA) can be written as follows:
\begin{equation}\label{alg1}
\begin{cases}
y^k=x^k+\theta_k(x^k-x^{k-1}),\\
x^{k+1}=Ty^k.
\end{cases}
\end{equation}
To proceed the above iteration, two initial vectors $x^0,x^1\in\bRn$ should be given.

Next, we review two existing momentum schemes. We denote the set of all nonnegative integers and the set of all positive integers by $\bN_0$ and $\bN_+$, respectively. Let $\bR_+$ denote the set of all positive real numbers. For two sequences $\{a_k\}_{k\in\bN_0}\subset\bR_+\cup\{0\}$ and $\{b_k\}_{k\in\bN_0}\subset\bR_+$, both tending to zero, if $\lim_{k\to\infty}\frac{a_k}{b_k}=0$, we write $a_k=o(b_k)$. If there exist constants $c>0$ and $K\in\bN_0$ such that $a_k\leq cb_k$ for all $k>K$, we write $a_k=O(b_k)$.

The most popular way of setting the momentum parameters $\{\theta_k\}_{k\in\bN_+}$ is given by Nesterov \cite{nesterov1983method} as follows:
\begin{equation}\label{Nesterovthetak}
\theta_k=\frac{t_{k-1}-1}{t_k}\ \ \mbox{with}\ \ t_k=\frac{1+\sqrt{1+4t_{k-1}^2}}{2},\ \ k\in\bN_+,
\end{equation}
where $t_0=1$. When $\{\theta_k\}_{k\in\bN_+}$ in Algorithm \eqref{alg1} is set to Nesterov's momentum scheme \eqref{Nesterovthetak}, the algorithm reduces to the well-known fast iterative shrinkage-thresholding algorithm (FISTA) \cite{beck2009fast}. It has been shown that the convergence rate in terms of the function value (FV-convergence rate) of FISTA is $O\left(\frac{1}{k^2}\right)$. Later, Chambolle and Dossal \cite{chambolle2015convergence} proved the convergence of the iterative sequence generated by AFBA with the following setting of momentum parameters
\begin{equation}\label{CDthetak}
\theta_k=\frac{k-1}{k+\alpha-1},\ \ \alpha>3,\ k\in\bN_+.
\end{equation}
We note that the setting \eqref{Nesterovthetak} of $\{\theta_k\}_{k\in\bN_+}$ is asymptotically equivalent to \eqref{CDthetak} with $\alpha=3$ by Proposition 2 of \cite{lin2019krasnoselskii}. In the recent work \cite{attouch2016rate}, Attouch and Peypouquet proved that AFBA with momentum scheme \eqref{CDthetak} can achieve an $o\left(\frac{1}{k^2}\right)$ FV-convergence rate and an $o\left(\frac{1}{k}\right)$ convergence rate in terms of the distance between consecutive iterates (DCI-convergence rate).

In this paper, we investigate the convergence, FV-convergence rate and DCI-convergence rate of AFBA with a more general setting of $\theta_k$:
\begin{equation}\label{ourthetak}
\theta_k=\frac{t_{k-1}-1}{t_k}\ \ \mbox{with} \ \ t_{k-1}=a(k-1)^\omega+b,\ \ k\in\bN_+,
\end{equation}
where $\omega\in(0,1]$ and $a\in\bR_+$. To avoid division by zero, without further mentioning, we always set $b\in\bR\backslash\{-ak^\omega:k\in\bN_+\}$ throughout the paper. It is easy to see that the setting \eqref{CDthetak} is a special case of \eqref{ourthetak} with $\omega=1$, $a=\frac{1}{\alpha-1}$ and $b=1$. In subsequent section, we shall show that both the FV-convergence rate and the DCI-convergence rate of AFBA with the generalized momentum scheme \eqref{ourthetak} depend on the order of $t_{k-1}$.

\section{Convergence and convergence rate analysis}\label{mainsec}
In this section, we always let $\{x^k\}_{k\in\bN_0}$ and $\{y^k\}_{k\in\bN_+}$ be two sequences generated by Algorithm \eqref{alg1} for any two initial vectors $x^0,x^1\in\bRn$, and let $x^*$ be any fixed point of $T$, that is, any minimizer of the objective function $F$. We shall show that if the sequence of momentum parameters $\{\theta_k\}_{k\in\bN_+}$ is given by \eqref{ourthetak}, the sequence $\{x^k\}_{k\in\bN_0}$ converges to a minimizer of $F$ with an $o\left(\frac{1}{k^{2\omega}}\right)$ FV-convergence rate and an $o\left(\frac{1}{k^\omega}\right)$ DCI-convergence rate. We begin with stating our main theorem of this section.
\begin{theorem}\label{mainthm}
Suppose that $\{\theta_k\}_{k\in\bN_+}$ is given by \eqref{ourthetak}. If either $\omega\in(0,1)$, $a\in\bR_+$  or $\omega=1$, $a\in\left(0,\frac{1}{2}\right)$ holds, then we have the following facts:
\begin{itemize}
\item[$(i)$] $\|x^{k}-x^{k-1}\|=o\left(\frac{1}{k^\omega}\right)$,
\item[$(ii)$] $F(x^k)-F(x^*)=o\left(\frac{1}{k^{2\omega}}\right)$,
\item[$(iii)$] $\{x^k\}_{k\in\bN_0}$ converges to a minimizer of $F$.
\end{itemize}
\end{theorem}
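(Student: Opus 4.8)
The plan is to run a discrete Lyapunov (energy) argument in the spirit of \cite{chambolle2015convergence,attouch2016rate}, with the power $\omega$ entering only through the growth of $t_{k-1}$. Write $h_k:=F(x^k)-F(x^*)\ge 0$. The starting point is the proximal-gradient descent inequality: since $\beta\le\frac1L$ and $T=\prox_{\beta g}\circ(\mI-\beta\nabla f)$ is averaged nonexpansive, for $x^+=Ty$ and every $z\in\bRn$ one has
\begin{equation}\label{plan-descent}
2\beta\left(F(x^+)-F(z)\right)\le\|y-z\|^2-\|x^+-z\|^2-\|x^+-y\|^2.
\end{equation}
I introduce the aggregate vector $z_k:=t_{k-1}x^k-(t_{k-1}-1)x^{k-1}$, for which the momentum identity $t_ky^k=z_k+(t_k-1)x^k$ gives the clean relation $z_{k+1}-z_k=t_k(x^{k+1}-y^k)$.

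Applying \eqref{plan-descent} to $(y,x^+)=(y^k,x^{k+1})$ with $z=x^k$ and with $z=x^*$, combining the two with weights $t_k-1$ and $1$ and multiplying by $t_k$, a routine completion of squares collapses the quadratic part into $\|z_k-x^*\|^2-\|z_{k+1}-x^*\|^2-\|z_{k+1}-z_k\|^2$, the last term being exactly the averaged slack $t_k^2\|x^{k+1}-y^k\|^2$. Setting $\mathcal{E}_k:=2\beta t_{k-1}^2h_k+\|z_k-x^*\|^2$ and $\gamma_k:=t_{k-1}^2-t_k(t_k-1)$, this yields
\begin{equation}\label{plan-energy}
\mathcal{E}_{k+1}\le\mathcal{E}_k-2\beta\gamma_k h_k-\|z_{k+1}-z_k\|^2.
\end{equation}
The decisive step is the sign and growth of $\gamma_k=t_{k-1}^2-t_k^2+t_k$. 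Using $t_k=ak^\omega+b$, I expand $t_k^2-t_{k-1}^2=2a^2\omega k^{2\omega-1}+o(k^{2\omega-1})$, so that $\gamma_k=ak^\omega-2a^2\omega k^{2\omega-1}+o(\cdot)$. For $\omega\in(0,1)$ the term $k^\omega$ dominates $k^{2\omega-1}$, giving $\gamma_k\sim ak^\omega>0$ for every $a>0$; for $\omega=1$ one has $\gamma_k\sim a(1-2a)k$, positive precisely when $a<\frac12$. These are exactly the two hypotheses, and in both cases $\gamma_k\ge c_0t_k\gtrsim k^\omega$ for all large $k$. Hence \eqref{plan-energy} makes $\mathcal{E}_k$ nonincreasing past a finite index, so it converges; boundedness gives $h_k=O(1/t_{k-1}^2)=O(1/k^{2\omega})$, while summing \eqref{plan-energy} gives $\sum_k\gamma_kh_k<\infty$ and $\sum_k\|z_{k+1}-z_k\|^2<\infty$, whence, since $\gamma_k\ge c_0t_k$, also $\sum_k t_kh_k<\infty$.

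For the DCI rate (i), set $\delta_k:=\|x^k-x^{k-1}\|$. Inequality \eqref{plan-descent} with $z=x^k$ gives $\delta_{k+1}^2\le\theta_k^2\delta_k^2+2\beta(h_k-h_{k+1})$; multiplying by $t_k^2$ and using $t_k\theta_k=t_{k-1}-1$ turns this into $t_k^2\delta_{k+1}^2\le(t_{k-1}-1)^2\delta_k^2+2\beta t_k^2(h_k-h_{k+1})$. An Abel summation of the right-hand side, controlled by $\sum_k t_kh_k<\infty$ (one checks $t_k^2-t_{k-1}^2\le Ct_k$), shows that $t_{k-1}^2\delta_k^2$ converges and that $\sum_k t_{k-1}\delta_k^2<\infty$. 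Since $\sum_k 1/t_{k-1}=\infty$ for $\omega\le1$, the limit of $t_{k-1}^2\delta_k^2$ must be $0$, giving $\delta_k=o(1/t_{k-1})=o(1/k^\omega)$.

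The FV $o$-rate (ii) and the convergence (iii) are the hardest part, because upgrading $O$ to $o$ requires the individual energy components to converge, not merely their sum. From $\sum_k t_kh_k<\infty$ together with $\sum_k 1/t_k=\infty$ one gets $\liminf_k t_k^2h_k=0$, so it suffices to show $t_{k-1}^2h_k$ converges; I obtain this by proving that $\|z_k-x^*\|^2$ converges (a quasi-Fej\'er argument, using the increment bound from \eqref{plan-energy} and the DCI summability to absorb the cross terms) and then reading off convergence of $b_k:=(\mathcal{E}_k-\|z_k-x^*\|^2)/(2\beta)=t_{k-1}^2h_k$, whence $t_{k-1}^2h_k\to0$, i.e.\ $h_k=o(1/k^{2\omega})$. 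For (iii) I would run a discrete Opial argument: \eqref{plan-energy} and the DCI rate bound $\{x^k\}$, every cluster point is a minimizer because $h_k\to0$ with $F$ lower semicontinuous (equivalently, by the fixed-point characterization of Proposition \ref{thm_minequiFP}), and $\|x^k-x^*\|$ converges for each minimizer $x^*$ once convergence of $\|z_k-x^*\|$ is transferred to $x^k$ via $\delta_k=o(1/k^\omega)$; Opial's lemma then forces the whole sequence to converge. The main obstacle throughout is the coupling in \eqref{plan-energy} between $h_k$ and $z_k$: the monotone energy alone yields only $O$-rates, and extracting the $o$-rates and iterate convergence hinges on separating and individually controlling $t_{k-1}^2h_k$ and $\|z_k-x^*\|^2$, which is precisely where the growth $\gamma_k\gtrsim t_k$ and the divergence of $\sum_k1/t_k$ do the real work.
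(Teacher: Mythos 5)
Your plan has the same architecture as the paper's proof: the same energy $\mathcal{E}_k=2\beta t_{k-1}^2h_k+\|z_k-x^*\|^2$ with the same aggregate sequence (your $z_k=t_{k-1}x^k-(t_{k-1}-1)x^{k-1}$ is exactly the paper's $z^k=t_ky^k+(1-t_k)x^k$ from \eqref{def_zk}), the same key quantity $\gamma_k=t_{k-1}^2-t_k(t_k-1)$ whose positivity and growth $\gamma_k\gtrsim t_k$ single out precisely the regimes $\omega\in(0,1)$, $a>0$ and $\omega=1$, $a<\tfrac12$, the same summability bounds $\sum_k t_kh_k<\infty$ and $\sum_k t_{k-1}\delta_k^2<\infty$, the same ``a limit exists and $\sum_k 1/t_k=\infty$ forces that limit to be zero'' mechanism, and the same Opial-type ending. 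However, two steps are genuinely flawed. First, your starting descent inequality is false as stated: with $\beta\le\frac1L$ one can only prove $2\beta\left(F(x^+)-F(z)\right)\le\|y-z\|^2-\|x^+-z\|^2-(1-\beta L)\|x^+-y\|^2$, and the paper allows (and its experiments use) $\beta=\frac1L$, where the slack coefficient vanishes. Concretely, take $n=1$, $g=0$, $f(x)=\tfrac12x^2$, $\beta=L=1$, $y=z=1$: then $x^+=Ty=0$ and your inequality reads $-1\le-2$. Hence the term $-\|z_{k+1}-z_k\|^2$ in your energy inequality and the conclusion $\sum_k\|z_{k+1}-z_k\|^2<\infty$ are not available; what survives is exactly the paper's Lemma~\ref{lemma_FzFx}, which still yields your inequality $\delta_{k+1}^2\le\theta_k^2\delta_k^2+2\beta(h_k-h_{k+1})$ and the energy inequality without the slack term, so this error is repairable.

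Second, and this is load-bearing for items $(ii)$ and $(iii)$, your decoupling is circular. Abel summation applied to $t_k^2\delta_{k+1}^2\le(t_{k-1}-1)^2\delta_k^2+2\beta t_k^2(h_k-h_{k+1})$ does \emph{not} show that $t_{k-1}^2\delta_k^2$ alone converges: rewriting $t_k^2(h_k-h_{k+1})=\left(t_k^2h_k-t_{k+1}^2h_{k+1}\right)+\left(t_{k+1}^2-t_k^2\right)h_{k+1}$ shows that the \emph{combined} quantity $t_{k-1}^2\delta_k^2+2\beta t_k^2h_k$ has increments bounded by the summable nonnegative sequence $2\beta(t_{k+1}^2-t_k^2)h_{k+1}\le Ct_{k+1}h_{k+1}$ and hence converges; peeling off the $\delta$ part would require knowing that $t_k^2h_k$ converges, which is what $(ii)$ is supposed to establish. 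Likewise, the quasi-Fej\'er argument for $\|z_k-x^*\|^2$ does not close: the increment bound from the (corrected) energy inequality contains the telescoping term $2\beta\left(t_{k-1}^2h_k-t_k^2h_{k+1}\right)$, which is not a summable nonnegative perturbation, and without the false slack term you do not even have $\sum_k\|z_{k+1}-z_k\|^2<\infty$ (quasi-Fej\'er would in any case want summability of $\|z_{k+1}-z_k\|$, not of its square). The repair is to give up decoupling: run the limit-existence argument on the combined quantity $t_{k-1}^2\left(2\beta h_k+\delta_k^2\right)$, then the contradiction between convergence of this quantity to a positive limit, the summability $\sum_k t_{k-1}\left(2\beta h_k+\delta_k^2\right)<\infty$, and $\sum_k 1/t_{k-1}=\infty$ forces the limit to be zero, which gives $(i)$ and $(ii)$ simultaneously; then $\|z_k-x^*\|^2=\mathcal{E}_k-2\beta t_{k-1}^2h_k$ converges automatically and $(iii)$ follows by your Opial argument. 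This joint treatment is exactly what the paper does in Propositions~\ref{prop_tkloc2} and~\ref{prop_limlocobj}.
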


We postpone the proof of Theorem \ref{mainthm} until we finish the establishment and verification of {\it Momentum-Condition}, which is sufficient to ensure the convergence and the desired convergence rate of AFBA. We first recall Lemma 2.3 of \cite{beck2009fast}.
\begin{lemma}\label{lemma_FzFx}
Let $x$, $y$ be any two vectors in $\bRn$ and set $z:=Ty$. Then
$$
F(z)\leq F(x)+\frac{1}{\beta}\langle y-x,y-z\rangle-\frac{1}{2\beta}\|y-z\|^2.
$$
\end{lemma}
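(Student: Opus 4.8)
The plan is to exploit the variational characterization of $z=Ty$ together with the smoothness of $f$ and the convexity of both $f$ and $g$. First I would unfold the definition $z=\prox_{\beta g}(y-\beta\nabla f(y))$ and apply Fermat's rule to the minimization defining the proximity operator. This yields the subgradient inclusion $\frac{1}{\beta}(y-z)-\nabla f(y)\in\partial g(z)$, so there is a vector $\gamma\in\partial g(z)$ with $\gamma=\frac{1}{\beta}(y-z)-\nabla f(y)$. This inclusion is the bridge that converts the implicit proximal step into an explicit inequality involving $g$.

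Next I would bound the two pieces of $F(z)=f(z)+g(z)$ separately. For the smooth part, the descent lemma for a function with $L$-Lipschitz gradient gives $f(z)\leq f(y)+\langle\nabla f(y),z-y\rangle+\frac{L}{2}\|z-y\|^2$; since $\beta\in\left(0,\frac{1}{L}\right]$ we have $\frac{L}{2}\leq\frac{1}{2\beta}$ and may replace the coefficient accordingly. Applying convexity of $f$ in the form $f(y)\leq f(x)+\langle\nabla f(y),y-x\rangle$ and merging the two inner products via $\langle\nabla f(y),z-y\rangle+\langle\nabla f(y),y-x\rangle=\langle\nabla f(y),z-x\rangle$ produces $f(z)\leq f(x)+\langle\nabla f(y),z-x\rangle+\frac{1}{2\beta}\|z-y\|^2$. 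For the nonsmooth part, the subgradient inequality $g(z)\leq g(x)+\langle\gamma,z-x\rangle$ together with the expression for $\gamma$ gives $g(z)\leq g(x)+\frac{1}{\beta}\langle y-z,z-x\rangle-\langle\nabla f(y),z-x\rangle$.

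Then I would add these two bounds. The crucial feature is that the two occurrences of $\langle\nabla f(y),z-x\rangle$ cancel exactly, leaving the gradient-free estimate $F(z)\leq F(x)+\frac{1}{2\beta}\|z-y\|^2+\frac{1}{\beta}\langle y-z,z-x\rangle$. To recover the stated form I would finish with the elementary quadratic identity $\frac{1}{2}\|z-y\|^2+\langle y-z,z-x\rangle=\langle y-x,y-z\rangle-\frac{1}{2}\|y-z\|^2$, which follows from expanding $\langle y-z,(y-x)-(z-x)\rangle=\|y-z\|^2$ and rearranging, then multiplying through by $\frac{1}{\beta}$.

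I do not anticipate a serious obstacle, since each step is a direct application of a standard inequality. The only points requiring care are the correct use of $\beta\leq\frac{1}{L}$ to absorb the Lipschitz constant into $\frac{1}{2\beta}$ in the right direction for an upper bound, and the bookkeeping in the final quadratic identity, where the exact cancellation of the gradient terms and the sign conventions in the inner products must be tracked precisely.
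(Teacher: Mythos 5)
Your proof is correct: the prox optimality condition via Fermat's rule, the descent lemma with $\beta\le\frac{1}{L}$, the convexity of $f$ and $g$, the exact cancellation of the $\langle\nabla f(y),z-x\rangle$ terms, and the final quadratic identity all check out. The paper itself gives no proof of this lemma --- it simply cites Lemma 2.3 of Beck and Teboulle's FISTA paper --- and your argument is essentially the standard proof given there, so there is nothing to flag.
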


For notational simplicity, throughout this section, we let
\begin{equation}\label{def_etaktauk}
\eta_k:=F(x^k)-F(x^*),\ \ \tau_k:=\frac{1}{2\beta}\|x^{k}-x^{k-1}\|^2,\ \ k\in\bN_+,
\end{equation}
and define the sequence $\{z^k\}_{k\in\bN_+}\subset\bRn$ by
\begin{equation}\label{def_zk}
z^k:=t_ky^k+(1-t_k)x^k,\ \ k\in\bN_+.
\end{equation}

By employing Lemma \ref{lemma_FzFx}, we next establish the following proposition that serves as an important tool in the analysis of convergence and convergence rate.
\begin{proposition}\label{prop_facts}
Let $\theta_k=\frac{t_{k-1}-1}{t_{k}}$, where $t_k\neq0$ for all $k\in\bN_+$, and define
\begin{equation}\label{def_epsilonk}
\varepsilon_k:=2\beta t_{k-1}^2\eta_k+\|z^k-x^*\|^2,\ \ k\in\bN_+.
\end{equation}
If there exists $K\in\bN_+$ such that $t_k(t_k-1)\leq t_{k-1}^2$ for all $k>K$, then the following facts hold:
\begin{itemize}
\item[$(i)$] $\varepsilon_{k+1}\leq\varepsilon_{k}$ for all $k>K$ and $\lim_{k\to\infty}\varepsilon_k$ exists,
\item[$(ii)$] $\eta_k\leq\frac{\varepsilon_K}{2\beta t_{k-1}^2}$ for all $k>K$,
\item[$(iii)$] $\sum_{k=1}^{\infty}\left[t_{k-1}^2-t_k(t_k-1)\right]\eta_k\leq\frac{\varepsilon_1}{2\beta}$.
\end{itemize}
\end{proposition}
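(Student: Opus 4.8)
The plan is to run a FISTA-style estimating-sequence argument, but carried out for the general parameters $t_k$. First I would apply Lemma \ref{lemma_FzFx} twice to the point $z=x^{k+1}=Ty^k$, once with $x=x^k$ and once with $x=x^*$, obtaining
\[
\eta_{k+1}-\eta_k\le\tfrac{1}{\beta}\langle y^k-x^k,\,y^k-x^{k+1}\rangle-\tfrac{1}{2\beta}\|y^k-x^{k+1}\|^2,
\]
\[
\eta_{k+1}\le\tfrac{1}{\beta}\langle y^k-x^*,\,y^k-x^{k+1}\rangle-\tfrac{1}{2\beta}\|y^k-x^{k+1}\|^2,
\]
where $\eta_k\ge 0$ because $x^*$ minimizes $F$. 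I would then take the nonnegative combination with weight $t_k-1$ on the first and weight $1$ on the second, using the fact that $(t_k-1)(y^k-x^k)+(y^k-x^*)=z^k-x^*$ by the definition \eqref{def_zk} of $z^k$. This collapses the two displays into
\[
t_k\eta_{k+1}-(t_k-1)\eta_k\le\tfrac{1}{\beta}\langle z^k-x^*,\,y^k-x^{k+1}\rangle-\tfrac{t_k}{2\beta}\|y^k-x^{k+1}\|^2.
\]

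The key algebraic step, which I expect to be the crux, is to rewrite the right-hand side purely in terms of $\{z^k\}$ so that it telescopes. Using $y^{k+1}=x^{k+1}+\theta_{k+1}(x^{k+1}-x^k)$ with $\theta_{k+1}=\frac{t_k-1}{t_{k+1}}$ together with the definition \eqref{def_zk}, a short computation gives $z^{k+1}=t_kx^{k+1}+(1-t_k)x^k$, and hence the telescoping identity $t_k(y^k-x^{k+1})=z^k-z^{k+1}$. Multiplying the previous display by $t_k$ and substituting this identity turns the right-hand side into $\tfrac{1}{\beta}\langle z^k-x^*,\,z^k-z^{k+1}\rangle-\tfrac{1}{2\beta}\|z^k-z^{k+1}\|^2$. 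The elementary identity $\langle a-c,\,a-b\rangle-\tfrac12\|a-b\|^2=\tfrac12(\|a-c\|^2-\|b-c\|^2)$ with $a=z^k$, $b=z^{k+1}$, $c=x^*$ then reduces this to $\tfrac{1}{2\beta}(\|z^k-x^*\|^2-\|z^{k+1}-x^*\|^2)$. Recalling \eqref{def_epsilonk} and that $t_{(k+1)-1}=t_k$, this is precisely the master inequality
\[
\varepsilon_{k+1}\le 2\beta\,t_k(t_k-1)\,\eta_k+\|z^k-x^*\|^2,\qquad k\in\bN_+.
\]

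With the master inequality in hand the three claims follow quickly. For $(i)$, for $k>K$ the hypothesis $t_k(t_k-1)\le t_{k-1}^2$ and $\eta_k\ge 0$ give $2\beta t_k(t_k-1)\eta_k\le 2\beta t_{k-1}^2\eta_k$, so the right-hand side is at most $\varepsilon_k$ and thus $\varepsilon_{k+1}\le\varepsilon_k$; since each $\varepsilon_k\ge 0$, the eventually non-increasing sequence $\{\varepsilon_k\}$ converges. For $(ii)$, dropping the nonnegative term $\|z^k-x^*\|^2$ and invoking the monotonicity from $(i)$ yields $2\beta t_{k-1}^2\eta_k\le\varepsilon_k\le\varepsilon_K$ for $k>K$, which rearranges to the stated bound. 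For $(iii)$, I would rewrite the master inequality as $2\beta[t_{k-1}^2-t_k(t_k-1)]\eta_k\le\varepsilon_k-\varepsilon_{k+1}$, valid for every $k\in\bN_+$, and sum from $k=1$ to $N$; the right-hand side telescopes to $\varepsilon_1-\varepsilon_{N+1}\le\varepsilon_1$. Since the summands are nonnegative for $k>K$, the partial sums are monotone in the tail and bounded, so letting $N\to\infty$ gives $\sum_{k=1}^{\infty}[t_{k-1}^2-t_k(t_k-1)]\eta_k\le\frac{\varepsilon_1}{2\beta}$.

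The main obstacle is the bookkeeping in the second paragraph: choosing the right weights $(t_k-1,1)$ and, above all, spotting and verifying the identity $z^{k+1}=t_kx^{k+1}+(1-t_k)x^k$, which is exactly what forces the inner-product and squared-norm terms to assemble into a clean difference $\|z^k-x^*\|^2-\|z^{k+1}-x^*\|^2$. Everything after that is monotonicity and telescoping. I would also watch the sign conditions throughout, namely that $\eta_k\ge 0$ and that the weight $t_k-1$ is nonnegative, so that combining the two instances of Lemma \ref{lemma_FzFx} preserves the inequalities.
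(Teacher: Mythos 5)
Your proposal is correct and takes essentially the same route as the paper's own proof: both apply Lemma \ref{lemma_FzFx} at $x=x^k$ and $x=x^*$, combine the two resulting inequalities with weights proportional to $(t_k-1,\,1)$ (the paper uses $\bigl(1-\tfrac{1}{t_k},\tfrac{1}{t_k}\bigr)$, a rescaling of yours), verify $z^{k+1}=t_kx^{k+1}+(1-t_k)x^k$ so that $t_k(y^k-x^{k+1})=z^k-z^{k+1}$, and complete the square to reach the telescoping inequality $\varepsilon_{k+1}+2\beta\left[t_{k-1}^2-t_k(t_k-1)\right]\eta_k\le\varepsilon_k$, from which $(i)$--$(iii)$ follow exactly as you describe. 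The only differences are cosmetic: the stage at which the factor $t_k$ is multiplied in, and your slightly more explicit limiting argument in $(iii)$ (the paper sums the telescoping inequality and lets the upper index tend to infinity without comment).
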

\begin{proof}
We first prove Fact $(i)$. For $k\in\bN_+$, by letting $x=x^k$, $y=y^k$ and $x=x^*$, $y=y^k$, respectively, in Lemma \ref{lemma_FzFx}, we have that
\begin{align}
\label{ineq_FxkkFxk}F(x^{k+1})\leq F(x^k)+\frac{1}{2\beta}\left(2\langle y^k-x^{k},y^k-x^{k+1}\rangle-\|y^k-x^{k+1}\|^2\right),\\
\label{ineq_FxkkFxstar}F(x^{k+1})\leq F(x^*)+\frac{1}{2\beta}\left(2\langle y^k-x^*,y^k-x^{k+1}\rangle-\|y^k-x^{k+1}\|^2\right).
\end{align}
Let $p_k:=2t_k\langle z^k-x^{*},y^k-x^{k+1}\rangle-t_k^2\|y^k-x^{k+1}\|^2$, $k\in\bN_+$. By noting that
\begin{align*}
\left(1-\frac{1}{t_k}\right)(y^k-x^k)+\frac{1}{t_k}(y^k-x^*)=\frac{1}{t_k}\left[t_ky^k+(1-t_k)x^k-x^*\right]=\frac{1}{t_k}(z^k-x^*),
\end{align*}
the combination $\left(1-\frac{1}{t_k}\right)\cdot\eqref{ineq_FxkkFxk}$+$\frac{1}{t_k}\cdot\eqref{ineq_FxkkFxstar}$ gives that
$$
F(x^{k+1})\leq\left(1-\frac{1}{t_k}\right)F(x^k)+\frac{1}{t_k}F(x^*)+\frac{1}{2\beta t_k^2}p_k,
$$
that is,
\begin{equation}\label{ineq_FxkkFxkFxstar} \eta_{k+1}\leq\left(1-\frac{1}{t_k}\right)\eta_{k}+\frac{1}{2\beta t_k^2}p_k,\ \ \mbox{for all}\ \ k\in\bN_+.
\end{equation}
To prove Fact $(i)$, we also need to verify the following equality
\begin{equation}\label{eq_zkp1zk}
z^{k+1}=z^k-t_k(y^k-x^{k+1}),\ \ \mbox{for all}\ \ k\in\bN_+.
\end{equation}
Substituting $y^{k+1}=x^{k+1}+\theta_{k+1}(x^{k+1}-x^{k})$ into the definition of $z^{k+1}$ in \eqref{def_zk}, and then using the facts $t_{k+1}\theta_{k+1}=t_k-1$ and $(1-t_k)x^k=z^k-t_ky^k$, we get that
\begin{align}
\notag z^{k+1}&=t_{k+1}\left[x^{k+1}+\theta_{k+1}(x^{k+1}-x^{k})\right]+(1-t_{k+1})x^{k+1}\\
\notag&=(1+t_{k+1}\theta_{k+1})x^{k+1}-t_{k+1}\theta_{k+1}x^{k}\\
\label{eq_zkp1tkxk}&=t_kx^{k+1}+(1-t_k)x^k\\
\notag&=t_kx^{k+1}+z^k-t_ky^k,
\end{align}
which implies \eqref{eq_zkp1zk}. Since $p_k=\|z^k-x^*\|^2-\|(z^k-x^*)-t_k(y^k-x^{k+1})\|^2$, it follows from \eqref{eq_zkp1zk} that
\begin{equation}\label{eq_zkxstar}
p_k=\|z^k-x^*\|^2-\|z^{k+1}-x^*\|^2.
\end{equation}
Substituting \eqref{eq_zkxstar} into \eqref{ineq_FxkkFxkFxstar} yields that
\begin{equation}\label{ineq_FxkkFxkFxstar2} \eta_{k+1}\leq\left(1-\frac{1}{t_k}\right)\eta_{k}+\frac{1}{2\beta t_k^2}\left(\|z^k-x^*\|^2-\|z^{k+1}-x^*\|^2\right).
\end{equation}
Multiplying both sides of \eqref{ineq_FxkkFxkFxstar2} by $2\beta t_k^2$ gives that
\begin{align*}
2\beta t_k^2\eta_{k+1}&\leq2\beta t_k(t_k-1)\eta_{k}+\|z^k-x^*\|^2-\|z^{k+1}-x^*\|^2\\
&=2\beta t_{k-1}^2\eta_{k}+\|z^k-x^*\|^2-\|z^{k+1}-x^*\|^2-2\beta\left[t_{k-1}^2-t_k(t_k-1)\right]\eta_{k},
\end{align*}
that is,
\begin{equation}\label{ineq_epsilon}
\varepsilon_{k+1}+2\beta\left[t_{k-1}^2-t_k(t_k-1)\right]\eta_{k}\leq\varepsilon_k,\ \ \mbox{for all}\ \ k\in\bN_+.
\end{equation}
Since $\eta_{k}\geq0$ and there exists $K\in\bN_+$ such that $t_k(t_k-1)\leq t_{k-1}^2$ for all $k>K$, Fact $(i)$ follows from \eqref{ineq_epsilon} immediately.

According to the definition of $\varepsilon_{k}$ and Fact $(i)$, we have that
$$
2\beta t_{k-1}^2\eta_{k}\leq\varepsilon_k\leq\varepsilon_K\ \ \mbox{for all}\ \ k>K,
$$
which implies Fact $(ii)$. Summing \eqref{ineq_epsilon} for $k=1,\ldots,K$ and using the fact $\varepsilon_{K+1}\geq0$, we obtain that
$$
\sum_{k=1}^{K}2\beta\left[t_{k-1}^2-t_k(t_k-1)\right]\eta_{k}\leq\varepsilon_1-\varepsilon_{K+1}\leq\varepsilon_1,
$$
which proves Fact $(iii)$.
\end{proof}

As a direct result of Fact $(ii)$ in Proposition \ref{prop_facts}, the following corollary can recover the $O\left(\frac{1}{k^2}\right)$ FV-convergence rate of FISTA shown in \cite{beck2009fast}.
\begin{corollary}\label{cor_FISTA}
Let $\theta_k=\frac{t_{k-1}-1}{t_{k}}$, where $t_k\neq0$ for all $k\in\bN_+$. If $t_k>0$ and $t_k(t_k-1)=t_{k-1}^2$ for all $k\in\bN_+$, then $F(x^k)-F(x^*)=O(\frac{1}{k^2})$.
\end{corollary}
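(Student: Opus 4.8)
The plan is to derive Corollary~\ref{cor_FISTA} directly from Fact~$(ii)$ of Proposition~\ref{prop_facts}, since the hypothesis $t_k(t_k-1)=t_{k-1}^2$ is precisely the borderline case of the condition $t_k(t_k-1)\leq t_{k-1}^2$ required there. First I would verify that the assumptions of Proposition~\ref{prop_facts} are met: the recurrence $t_k(t_k-1)=t_{k-1}^2$ gives equality (hence certainly $\leq$) for every $k\in\bN_+$, so one may take $K=1$. Applying Fact~$(ii)$ with $K=1$ then yields the bound $\eta_k\leq\frac{\varepsilon_1}{2\beta t_{k-1}^2}$ for all $k>1$, where $\eta_k=F(x^k)-F(x^*)$. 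Thus the entire problem reduces to controlling the growth of the denominator $t_{k-1}^2$, i.e. showing that $t_{k-1}$ grows at least linearly in $k$, so that $\frac{1}{t_{k-1}^2}=O\left(\frac{1}{k^2}\right)$.

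The heart of the argument is therefore a lower bound of the form $t_{k-1}\geq c\,k$ for some constant $c>0$ and all large $k$. The natural route is to rewrite the recurrence $t_k(t_k-1)=t_{k-1}^2$ as $t_k^2-t_k=t_{k-1}^2$, which gives $t_k^2=t_{k-1}^2+t_k$. Since $t_k>0$, this shows $t_k^2$ is strictly increasing; combined with $t_k^2=t_{k-1}^2+t_k>t_{k-1}^2$, one obtains $t_k>t_{k-1}$, so $\{t_k\}$ is increasing. Telescoping $t_k^2-t_{k-1}^2=t_k$ from the initial index up to $k$ gives $t_k^2=t_0^2+\sum_{j=1}^{k}t_j$. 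The standard FISTA estimate is to prove by induction that $t_k\geq\frac{k}{2}$ (or $t_k\geq\frac{k+1}{2}$ depending on normalization): assuming $t_{k-1}\geq\frac{k}{2}$, the relation $t_k^2=t_{k-1}^2+t_k$ together with the quadratic formula $t_k=\frac{1+\sqrt{1+4t_{k-1}^2}}{2}\geq\frac{1+\sqrt{1+k^2}}{2}\geq\frac{1+k}{2}$ advances the induction. This establishes $t_{k-1}=\Omega(k)$ and hence $t_{k-1}^2\geq c'k^2$ for a suitable $c'>0$.

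Combining the two ingredients, I would conclude
\begin{equation*}
F(x^k)-F(x^*)=\eta_k\leq\frac{\varepsilon_1}{2\beta t_{k-1}^2}\leq\frac{\varepsilon_1}{2\beta c' k^2}=O\left(\frac{1}{k^2}\right),
\end{equation*}
which is the desired FV-convergence rate. The main obstacle, and really the only nonroutine step, is establishing the linear lower bound $t_{k-1}\geq ck$ rigorously from the recursion; everything else is a direct quotation of Fact~$(ii)$. One subtlety worth checking is that the hypothesis only fixes the recurrence $t_k(t_k-1)=t_{k-1}^2$ and the positivity $t_k>0$, but does not pin down the initial value $t_0$, so the induction base case must be argued from whatever normalization is implicitly in force (or the lower bound must be stated with a constant depending on $t_0$); since $\varepsilon_1$ already absorbs the initial data, this does not affect the $O\left(\frac{1}{k^2}\right)$ conclusion. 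I would also note that the equality case $t_k(t_k-1)=t_{k-1}^2$ makes the summand $t_{k-1}^2-t_k(t_k-1)$ in Fact~$(iii)$ vanish identically, which is consistent with the fact that this borderline scheme yields only the $O$-rate rather than the stronger $o$-rate obtained later under the strict inequality.
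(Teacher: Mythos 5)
Your proposal is correct and takes essentially the same route as the paper's proof: both solve the quadratic relation $t_k(t_k-1)=t_{k-1}^2$ to get $t_k=\frac{1+\sqrt{1+4t_{k-1}^2}}{2}$ (the root forced by $t_k>0$), establish the linear lower bound $t_k>\frac{k+1}{2}$ by induction, and conclude via Fact~$(ii)$ of Proposition~\ref{prop_facts}. Your concern about the unspecified initial value $t_0$ is easily dispatched: the quadratic formula itself gives $t_1=\frac{1+\sqrt{1+4t_0^2}}{2}\geq1$ for any $t_0$, which supplies the induction base case without any normalization assumption.
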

\begin{proof}
Solving the quadratic equation $t_k(t_k-1)=t_{k-1}^2$ with unknown $t_k$, we obtain that $t_k=\frac{1\pm\sqrt{1+4t_{k-1}^2}}{2}$. Since $t_k>0$, it is necessary to choose
\begin{equation}\label{Nesterovtk}
t_k=\frac{1+\sqrt{1+4t_{k-1}^2}}{2},\ \ k\in\bN_+.
\end{equation}
According to \eqref{Nesterovtk}, we can verify by mathematical induction that $t_k>\frac{k+1}{2}$ holds for all $k\in\bN_+$, which together with Fact $(ii)$ in Proposition \ref{prop_facts} implies that $F(x^k)-F(x^*)=O\left(\frac{1}{k^2}\right)$.
\end{proof}

To obtain the convergence and convergence rate results of AFBA with momentum setting \eqref{ourthetak}, we need some hypotheses on the momentum parameters, which shall be used frequently in the rest of this section. For a sequence $\{t_k\}_{k\in\bN_0}\subset\bR$, we say that it satisfies {\it Momentum-Condition} if the following hypotheses are satisfied:
\begin{itemize}
\item[$(i)$] $t_k\neq0$ for all $k\in\bN_+$.
\item[$(ii)$] There exist $\rho\in\bR_+$ and $K_1\in\bN_+$ such that
\begin{equation}\label{neq_momcondtk1}
1\leq t_{k-1}<\rho\left[t_{k-1}^2-t_k(t_k-1)\right],\ \ \mbox{for all}\ \ k>K_1.
\end{equation}
\item[$(iii)$] There exist $c_1,c_2\in\bR_+$ and $K_2\in\bN_+$ such that
\begin{equation}\label{neq_momcondtk2}
c_1t_{k}\leq t_{k-1}\leq c_2t_{k},\ \ \mbox{for all}\ \ k>K_2.
\end{equation}
\item[$(iv)$] $\lim_{k\to\infty}t_k=+\infty$ and $\sum_{k=1}^{\infty}\frac{1}{t_k}=+\infty$.
\end{itemize}

We now establish the boundness of two series $\sum_{k=1}^{\infty}t_{k-1}\eta_k$ and $\sum_{k=1}^{\infty}t_{k-1}\tau_k$, which are crucial for the proof of higher-order infinitesimal $o(\cdot)$ convergence rate. The boundness of the former series is a direct result of Fact $(iii)$ in Proposition \ref{prop_facts}.

\begin{proposition}\label{prop_tkobj}
Let $\theta_k=\frac{t_{k-1}-1}{t_{k}}$, $k\in\bN_+$, where $\{t_k\}_{k\in\bN_0}\subset\bR$ satisfies Item $(i)$ and $(ii)$ of Momentum-Condition.
Then $\sum_{k=1}^{\infty}t_{k-1}\eta_k<+\infty$.
\end{proposition}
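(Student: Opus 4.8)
The plan is to reduce the claim to Fact $(iii)$ of Proposition \ref{prop_facts}, so the whole proof is really a matter of verifying that the hypotheses of that proposition hold and then running a comparison test. The key observation is that Item $(ii)$ of Momentum-Condition does double duty. On one hand, the inequality \eqref{neq_momcondtk1} forces $t_{k-1}^2-t_k(t_k-1)>t_{k-1}/\rho>0$ for all $k>K_1$ (using $t_{k-1}\ge 1$), so in particular $t_k(t_k-1)\le t_{k-1}^2$ for all $k>K_1$; together with Item $(i)$, which guarantees $t_k\neq 0$, this is exactly the pair of hypotheses needed to invoke Proposition \ref{prop_facts} with $K=K_1$. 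On the other hand, the same inequality supplies the linear comparison $t_{k-1}\le\rho\left[t_{k-1}^2-t_k(t_k-1)\right]$ that I will use to pass from the weighted series appearing in Fact $(iii)$ to the target series $\sum_k t_{k-1}\eta_k$.

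Next I would apply Fact $(iii)$ of Proposition \ref{prop_facts}, which yields $\sum_{k=1}^{\infty}\left[t_{k-1}^2-t_k(t_k-1)\right]\eta_k\le\frac{\varepsilon_1}{2\beta}<+\infty$, the constant $\varepsilon_1$ being finite by its definition \eqref{def_epsilonk}. Since $\eta_k\ge 0$ for every $k$ (because $x^*$ minimizes $F$), I can bound the tail of the target series term by term: for $k>K_1$, the comparison from \eqref{neq_momcondtk1} gives $t_{k-1}\eta_k\le\rho\left[t_{k-1}^2-t_k(t_k-1)\right]\eta_k$, and summing over $k>K_1$ shows $\sum_{k>K_1}t_{k-1}\eta_k\le\frac{\rho\,\varepsilon_1}{2\beta}<+\infty$. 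Adding the finite head $\sum_{k=1}^{K_1}t_{k-1}\eta_k$ then completes the argument.

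There is no serious obstacle here; the proposition is essentially a repackaging of Fact $(iii)$ of Proposition \ref{prop_facts}. The only point that requires any care is recognizing that Item $(ii)$ of Momentum-Condition simultaneously supplies both the eventual sign condition $t_k(t_k-1)\le t_{k-1}^2$ demanded by Proposition \ref{prop_facts} and the one-sided linear comparison between $t_{k-1}$ and $t_{k-1}^2-t_k(t_k-1)$. Once that is noticed, the remainder is a routine comparison test combined with the nonnegativity of $\eta_k$, and the finiteness of the finitely many initial terms.
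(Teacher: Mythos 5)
Your proof is correct and takes essentially the same route as the paper's: multiply the second inequality of \eqref{neq_momcondtk1} by $\eta_k\geq 0$, sum over $k>K_1$, and invoke Fact $(iii)$ of Proposition \ref{prop_facts}. Your additional steps---verifying the hypotheses of Proposition \ref{prop_facts} and accounting for the finite head $\sum_{k=1}^{K_1}t_{k-1}\eta_k$---merely make explicit what the paper leaves implicit.
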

\begin{proof}
Multiplying both sides of the second inequality of \eqref{neq_momcondtk1} by $\eta_k$ and summing the resulting inequality for $k$ from $K_1+1$ to infinity yields that
$$
\sum_{k=K_1+1}^{\infty}t_{k-1}\eta_k<\rho\sum_{k=K_1+1}^{\infty}\left[t_{k-1}^2-t_k(t_k-1)\right]\eta_k,
$$
which implies the desired result by using Fact $(iii)$ in Proposition \ref{prop_facts}.
\end{proof}

We next prove the boundness of the other series as follows.
\begin{proposition}\label{prop_tkloc2}
Let $\theta_k=\frac{t_{k-1}-1}{t_{k}}$, $k\in\bN_+$, where $\{t_k\}_{k\in\bN_0}\subset\bR$ satisfies Item $(i)$--$(iii)$ of Momentum-Condition. Then $\sum_{k=1}^{\infty}t_{k-1}\tau_k<+\infty$.
\end{proposition}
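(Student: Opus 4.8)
The plan is to derive a one-step recursion linking the successive distances $\tau_{k+1}$, $\tau_k$ to the objective gaps $\eta_k$, then multiply it by the weight $t_k^2$ to produce a telescoping inequality whose cross terms are controlled by the already-finite series $\sum_k t_{k-1}\eta_k$ furnished by Proposition \ref{prop_tkobj}.

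First I would revisit inequality \eqref{ineq_FxkkFxk}. Writing $y^k-x^k=\theta_k(x^k-x^{k-1})$ and $x^{k+1}-x^k=\theta_k(x^k-x^{k-1})-(y^k-x^{k+1})$, the quantity $2\langle y^k-x^k,y^k-x^{k+1}\rangle-\|y^k-x^{k+1}\|^2$ collapses, after completing the square, to $\theta_k^2\|x^k-x^{k-1}\|^2-\|x^{k+1}-x^k\|^2$. In terms of the notation \eqref{def_etaktauk} this becomes
$$\tau_{k+1}\le \eta_k-\eta_{k+1}+\theta_k^2\tau_k,\qquad k\in\bN_+.$$
Next I multiply this recursion by $t_k^2$ and use $t_k\theta_k=t_{k-1}-1$, so that $t_k^2\theta_k^2=(t_{k-1}-1)^2=t_{k-1}^2-(2t_{k-1}-1)$. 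Setting $S_k:=t_{k-1}^2\tau_k$ yields
$$S_{k+1}-S_k+(2t_{k-1}-1)\tau_k\le t_k^2(\eta_k-\eta_{k+1}).$$
Summing from $1$ to $N$, the left-hand side telescopes to $S_{N+1}-S_1+\sum_{k=1}^N(2t_{k-1}-1)\tau_k$, while on the right an Abel summation gives $t_1^2\eta_1-t_N^2\eta_{N+1}+\sum_{k=2}^N(t_k^2-t_{k-1}^2)\eta_k$. Dropping the nonnegative terms $S_{N+1}$ and $t_N^2\eta_{N+1}$ leaves $\sum_{k=1}^N(2t_{k-1}-1)\tau_k$ bounded by $S_1+t_1^2\eta_1+\sum_{k=2}^N(t_k^2-t_{k-1}^2)\eta_k$.

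The crux, and the step I expect to be the \emph{main obstacle}, is controlling $\sum_k(t_k^2-t_{k-1}^2)\eta_k$: the increments $\eta_k-\eta_{k+1}$ are not signed, and Item $(iii)$ alone only bounds $t_k^2-t_{k-1}^2$ by $O(t_{k-1}^2)$, which is too weak. The key observation is that Item $(ii)$ forces $t_k(t_k-1)<t_{k-1}^2$, i.e. $t_k^2-t_{k-1}^2<t_k$, and then Item $(iii)$ upgrades this to $t_k^2-t_{k-1}^2<t_k\le c_1^{-1}t_{k-1}$ for all large $k$. Hence the tail of $\sum_k(t_k^2-t_{k-1}^2)\eta_k$ is dominated by $c_1^{-1}\sum_k t_{k-1}\eta_k$, which is finite by Proposition \ref{prop_tkobj}, and the finitely many remaining terms are harmless. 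This makes the right-hand side bounded uniformly in $N$.

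Finally, using $t_{k-1}\ge 1$ from Item $(ii)$ gives $2t_{k-1}-1\ge t_{k-1}$ for all large $k$, so the nonnegative tail terms $t_{k-1}\tau_k$ are dominated by $(2t_{k-1}-1)\tau_k$, whose partial sums are uniformly bounded. Adding a finite initial block then yields $\sum_{k=1}^{\infty}t_{k-1}\tau_k<+\infty$, as claimed.
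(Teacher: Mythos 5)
Your proof is correct and takes essentially the same route as the paper: the identical recursion $\eta_{k+1}+\tau_{k+1}\leq\eta_k+\theta_k^2\tau_k$, multiplication by $t_k^2$, telescoping of $t_{k-1}^2\tau_k$, and the same use of Item $(ii)$ (through $t_k(t_k-1)<t_{k-1}^2$), Item $(iii)$, and Proposition \ref{prop_tkobj} to control the objective-gap terms. The only cosmetic difference is that you sum first and then apply Abel summation to $\sum_k t_k^2(\eta_k-\eta_{k+1})$, whereas the paper performs the equivalent index shift term-by-term (replacing $t_k^2\eta_{k+1}$ by $t_{k+1}(t_{k+1}-1)\eta_{k+1}$ plus a remainder bounded by $ct_k\eta_{k+1}$) before summing; the two manipulations are algebraically the same.
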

\begin{proof}
We first show that
\begin{equation}\label{etaktaukineq}
\eta_{k+1}+\tau_{k+1}\leq\eta_k+\theta_k^2\tau_k,\ \ \mbox{for all}\ \ k\in\bN_+.
\end{equation}
From the proof of Proposition \ref{prop_facts}, we know that \eqref{ineq_FxkkFxk} holds. Substituting $y^k=x^k+\theta_k(x^k-x^{k-1})$ into \eqref{ineq_FxkkFxk} yields that
\begin{align}
\notag F(x^{k+1})&\leq F(x^k)+\frac{1}{\beta}\langle \theta_k(x^k-x^{k-1}),(x^k-x^{k+1})+\theta_k(x^k-x^{k-1})\rangle\\
\notag&\hspace{12pt}-\frac{1}{2\beta}\|(x^k-x^{k+1})+\theta_k(x^k-x^{k-1})\|^2\\
\label{ineqfortktauk}&=F(x^k)+\frac{1}{2\beta}\theta_k^2\|x^k-x^{k-1}\|^2-\frac{1}{2\beta}\|x^{k+1}-x^k\|^2.
\end{align}
Subtracting $F(x^*)$ from both sides of \eqref{ineqfortktauk} and recalling the definitions of $\eta_k$ and $\tau_k$ in \eqref{def_etaktauk}, we obtain \eqref{etaktaukineq}.

Multiplying both sides of \eqref{etaktaukineq} by $t_k^2$ yields that
\begin{equation}\label{ineq1_tauketak}
t_k^2(\eta_{k+1}+\tau_{k+1})\leq t_k^2\eta_k+(t_{k-1}-1)^2\tau_k,
\end{equation}
that is,
\begin{equation}\label{ineq2_tauketak}
(2t_{k-1}-1)\tau_k+\left(t_k^2\tau_{k+1}-t_{k-1}^2\tau_k\right)\leq t_k^2(\eta_k-\eta_{k+1}),\ \ \mbox{for all}\ \ k\in\bN_+.
\end{equation}
It follows from Item $(ii)$ and $(iii)$ of Momentum-Condition that there exist $c\in\bR_+$ and $K\in\bN_+$ such that $t_{k-1}\geq1$, $0<t_{k+1}(t_{k+1}-1)<t_{k}^2$ and $0<t_{k+1}\leq ct_k$ for all $k>K$,
which together with \eqref{ineq2_tauketak} give that
\begin{align*}
t_{k-1}\tau_k+\left(t_k^2\tau_{k+1}-t_{k-1}^2\tau_k\right)&\leq t_{k}^2\eta_k-t_{k+1}(t_{k+1}-1)\eta_{k+1}\\
&\leq\left(t_{k}^2\eta_k-t_{k+1}^2\eta_{k+1}\right)+ct_{k}\eta_{k+1}, \ \ \mbox{for all}\ \ k>K.
\end{align*}
Summing the above inequality for $k=K+1,K+2,\ldots,M$, we obtain that
$$
\sum_{k=K+1}^{M}t_{k-1}\tau_k+t_M^2\tau_{M+1}-t_{K}^2\tau_{K+1}\leq t_{K+1}^2\eta_{K+1}-t_{M+1}^{2}\eta_{M+1}+c\sum_{k=K+1}^{M}t_k\eta_{k+1},
$$
which yields that
\begin{equation}\label{ineq1_fortaukrate}
\sum_{k=K+1}^{M}t_{k-1}\tau_k\leq t_{K}^2\tau_{K+1}+t_{K+1}^2\eta_{K+1}+c\sum_{k=K+1}^{M}t_k\eta_{k+1}.
\end{equation}
Now letting $M\to\infty$ in \eqref{ineq1_fortaukrate} and using Proposition \ref{prop_tkobj}, we find that
$$
\sum_{k=K+1}^{\infty}t_{k-1}\tau_k<+\infty,
$$
which implies the desired result.
\end{proof}

We now apply the boundness of the above two series to establish the convergence rate, which can be achieved via proving $\lim_{k\to\infty}t_{k-1}^2\left(\tau_k+\eta_k\right)=0$. For this purpose, we need the following technical lemma.
\begin{lemma}\label{lem_existlim}
Let $\{a_k\}_{k\in\bN_0}\subset\bR$ be a sequence with a lower bound, and $\{b_k\}_{k\in\bN_+}\subset\bR$ be a sequence satisfying $\sum_{k=1}^{\infty}b_k<+\infty$. If there exists $K\in\bN_+$ such that $b_k\geq0$ and $a_{k}-a_{k-1}\leq b_k$ hold for all $k>K$, then $\lim_{k\to\infty}a_k$ exists.
\end{lemma}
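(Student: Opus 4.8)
The plan is to reduce the claim to the monotone convergence theorem by absorbing the increments $b_k$ into a correction term built from the partial sums of $\{b_k\}_{k\in\bN_+}$. The hypothesis $a_k-a_{k-1}\le b_k$ allows $\{a_k\}$ to increase from step to step, but only by an amount controlled by a summable sequence; subtracting off the accumulated $b_k$ should therefore yield a genuinely nonincreasing sequence to which monotone convergence applies.

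Concretely, I would first fix a lower bound $m\in\bR$ with $a_k\ge m$ for all $k\in\bN_0$, and set
\[
s_k:=\sum_{j=K+1}^{k}b_j\quad(k\ge K),
\]
with the convention that the empty sum $s_K$ equals $0$. Since $b_j\ge 0$ for $j>K$ and $\sum_{k=1}^{\infty}b_k<+\infty$, the sequence $\{s_k\}_{k\ge K}$ is nondecreasing and bounded above by $S:=\sum_{j=K+1}^{\infty}b_j<+\infty$, hence $s_k\to S$.

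Next I would introduce the shifted sequence $c_k:=a_k-s_k$ for $k\ge K$. For every $k>K$ the hypothesis gives
\[
c_k-c_{k-1}=(a_k-a_{k-1})-b_k\le 0,
\]
so $\{c_k\}_{k\ge K}$ is nonincreasing. Moreover, using $s_k\le S$ we obtain $c_k=a_k-s_k\ge m-S$, so $\{c_k\}$ is bounded below. A nonincreasing sequence that is bounded below converges; denote its limit by $L$. Finally, writing $a_k=c_k+s_k$ and letting $k\to\infty$ shows that both summands converge, whence $\lim_{k\to\infty}a_k=L+S$ exists.

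I do not anticipate a serious obstacle: the argument is elementary once the partial-sum correction is introduced. The only points requiring care are the index bookkeeping around $K$ — the hypothesis $a_k-a_{k-1}\le b_k$ is assumed only for $k>K$, which is exactly what makes $\{c_k\}$ nonincreasing from index $K$ onward — and the observation that the global lower bound on $\{a_k\}$ together with the uniform bound $s_k\le S$ supplies the lower bound on $\{c_k\}$ needed to invoke monotone convergence.
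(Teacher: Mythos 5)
Your proof is correct, and it takes a genuinely different route from the paper. You use the classical ``quasi-monotone'' correction: subtracting the partial sums $s_k=\sum_{j=K+1}^{k}b_j$ turns $\{a_k\}$ into a nonincreasing sequence $c_k=a_k-s_k$ that is bounded below, so the monotone convergence theorem applies at once, and $a_k=c_k+s_k$ converges as a sum of two convergent sequences. The paper instead first shows $\{a_k\}$ is bounded above (by telescoping the increments against the tail of $\sum b_k$), extracts a convergent subsequence $\{a_{k_j}\}$ via Bolzano--Weierstrass, and then runs a two-sided $\varepsilon$-argument — bounding $a_k$ above by working forward from $a_{k_J}$ and below by working backward from a later subsequence term $a_{k_{J'}}$ — to show the full sequence converges to the subsequential limit. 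Both arguments rest on completeness of $\bR$, but yours replaces the subsequence extraction and the $\varepsilon$ bookkeeping with a single algebraic decomposition, which is shorter and makes the mechanism transparent (the $b_k$'s can only push $a_k$ up by a summable total, so after removing that drift the sequence genuinely decreases); the paper's approach requires no auxiliary sequence but pays for it with more delicate index management around $K$, $J_1$, $J_2$, and $J'$. Your index handling at $k=K$ and the lower bound $c_k\ge m-S$ are both handled correctly, so there is no gap.
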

\begin{proof}
By the facts $a_{k}-a_{k-1}\leq b_k$ and $b_k\geq0$, we have that
$$
a_k-a_{K}=\sum_{j=K+1}^{k}(a_j-a_{j-1})\leq\sum_{j=K+1}^{k}b_j\leq\sum_{j=K+1}^{\infty}b_j,\ \ \mbox{for all}\ \ k>K,
$$
which together with $\sum_{k=1}^{\infty}b_k<+\infty$ implies that $\{a_k\}_{k\in\bN_0}$ has an upper bound. Since $\{a_k\}_{k\in\bN_0}$ also has a lower bound, we know that there exists a subsequence $\{a_{k_j}\}_{j\in\bN_+}$ of $\{a_{k}\}_{k\in\bN_0}$ converging to some $a^*\in\bR$. We next prove that $\{a_{k}\}_{k\in\bN_0}$ also converges to $a^*$.

Let $\varepsilon>0$ be arbitrary. Since $\lim_{j\to\infty}a_{k_j}=a^*$, there exists $J_1\in\bN_+$ such that $a^*-\varepsilon<a_{k_j}<a^*+\varepsilon$ for all $j\geq J_1$. In addition, we note that $\sum_{k=1}^{\infty}b_k<+\infty$ and $b_k\geq0$ for all $k>K$. There exists $J_2\in\bN_+$ such that $\sum_{i=k_{J_2}}^{\infty}b_i<\varepsilon$. Let $J=\max\{J_1,J_2\}$ and $k>k_J$. Since there exists $J'\in\bN_+$ such that $k_{J'}>k$, we have that
$$
a_k=a_{k_{J'}}-\sum_{i=k}^{k_{J'}-1}(a_{i+1}-a_i)\geq a_{k_{J'}}-\sum_{i=k+1}^{k_{J'}}b_i>a^*-2\varepsilon.
$$
In addition,
$$
a_k=a_{k_J}+\sum_{i=k_{J}+1}^{k}(a_i-a_{i-1})\leq a_{k_{J}}+\sum_{i=k_{J}+1}^{k}b_i<a^*+2\varepsilon.
$$
We conclude that for any $\varepsilon>0$, there exists $J\in\bN_+$ such that $|a_k-a^*|<2\varepsilon$ holds for all $k>k_{J}$, which implies that $\lim_{k\to\infty}a_k=a^*$.
\end{proof}

\begin{proposition}\label{prop_limlocobj}
Let $\theta_k=\frac{t_{k-1}-1}{t_{k}}$, $k\in\bN_+$, where $\{t_k\}_{k\in\bN_0}\subset\bR$ satisfies Momentum-Condition. Then $\lim_{k\to\infty}t_{k-1}^2\left(\tau_k+\eta_k\right)=0$.
\end{proposition}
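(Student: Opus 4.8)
The plan is to prove the claim in two stages: first establish that the sequence $a_k:=t_{k-1}^2(\tau_k+\eta_k)$ has a (finite) limit $\ell\ge 0$, and then rule out $\ell>0$ by a summability-versus-divergence contradiction. Both stages rest on the two boundedness results already in hand, namely $\sum_k t_{k-1}\eta_k<+\infty$ (Proposition \ref{prop_tkobj}) and $\sum_k t_{k-1}\tau_k<+\infty$ (Proposition \ref{prop_tkloc2}), together with the structural inequality \eqref{ineq1_tauketak} derived in the proof of Proposition \ref{prop_tkloc2}.

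For the first stage I would start from \eqref{ineq1_tauketak}, which reads $t_k^2(\eta_{k+1}+\tau_{k+1})\le t_k^2\eta_k+(t_{k-1}-1)^2\tau_k$, i.e.\ $a_{k+1}\le t_k^2\eta_k+(t_{k-1}-1)^2\tau_k$. Subtracting $a_k=t_{k-1}^2\tau_k+t_{k-1}^2\eta_k$ gives
\[
a_{k+1}-a_k\le (t_k^2-t_{k-1}^2)\eta_k+(1-2t_{k-1})\tau_k.
\]
Item $(ii)$ of Momentum-Condition forces $t_k(t_k-1)<t_{k-1}^2$, hence $t_k^2-t_{k-1}^2<t_k$, while $t_{k-1}\ge 1$ makes $(1-2t_{k-1})\tau_k\le 0$; dropping the nonpositive term leaves $a_{k+1}-a_k\le t_k\eta_k$ for all large $k$. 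Item $(iii)$ then yields $t_k\le c_1^{-1}t_{k-1}$, so $\sum_k t_k\eta_k\le c_1^{-1}\sum_k t_{k-1}\eta_k<+\infty$ by Proposition \ref{prop_tkobj}. Since $a_k\ge 0$ is bounded below and its forward increments are dominated by the summable nonnegative sequence $t_k\eta_k$, Lemma \ref{lem_existlim} (applied after the obvious index shift $b_k:=t_{k-1}\eta_{k-1}$) gives that $\ell:=\lim_{k\to\infty}a_k$ exists.

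For the second stage I would argue by contradiction. If $\ell>0$, then $t_{k-1}^2(\tau_k+\eta_k)\ge \ell/2$ for all large $k$, whence $t_{k-1}(\tau_k+\eta_k)\ge \frac{\ell}{2t_{k-1}}$. Summing and using $t_{k-1}\le c_2 t_k$ from Item $(iii)$ together with $\sum_k t_k^{-1}=+\infty$ from Item $(iv)$ shows $\sum_k t_{k-1}(\tau_k+\eta_k)=+\infty$. But Propositions \ref{prop_tkobj} and \ref{prop_tkloc2} give $\sum_k t_{k-1}(\tau_k+\eta_k)=\sum_k t_{k-1}\tau_k+\sum_k t_{k-1}\eta_k<+\infty$, a contradiction. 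Hence $\ell=0$, which is precisely the assertion.

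The one genuinely delicate point — and the step I expect to be the main obstacle — is the increment estimate: one must extract from Item $(ii)$ exactly the bound $t_k^2-t_{k-1}^2<t_k$ so that the ``bad'' term $(t_k^2-t_{k-1}^2)\eta_k$ collapses to the summable $t_k\eta_k$ rather than to something like $t_k^2\eta_k$ (which is merely bounded, not summable). It is the interplay between Item $(ii)$ (controlling $t_k^2-t_{k-1}^2$), Item $(iii)$ (passing between $t_k$ and $t_{k-1}$ in both directions), and Item $(iv)$ (the divergence $\sum_k t_k^{-1}=+\infty$ powering the contradiction) that makes all four hypotheses of Momentum-Condition indispensable here; none can be omitted.
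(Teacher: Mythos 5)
Your proposal is correct and follows essentially the same route as the paper: both establish $a_{k+1}-a_k\le t_k\eta_k$ from \eqref{ineq1_tauketak} using $t_{k-1}\ge1$ and $t_k(t_k-1)<t_{k-1}^2$ from Item $(ii)$, invoke Lemma \ref{lem_existlim} with summability of $\sum_k t_k\eta_k$ (via Item $(iii)$ and Proposition \ref{prop_tkobj}) to get existence of the limit, and then run the identical divergence-versus-summability contradiction using Item $(iv)$ together with Propositions \ref{prop_tkobj} and \ref{prop_tkloc2}. The only cosmetic difference is that you obtain the increment bound by directly subtracting $a_k$ and discarding the nonpositive $(1-2t_{k-1})\tau_k$ term, whereas the paper splits it into the two inequalities \eqref{etatauineq2} and \eqref{tketakeq1} and adds them; the content is the same.
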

\begin{proof}
To simplify the notation, we let $p_k:=\tau_k+\eta_k$, $k\in\bN_+$. We now prove the existence of $\lim_{k\to\infty}t_{k-1}^2p_k$ by employing Lemma \ref{lem_existlim} with $a_{k-1}:=t_{k-1}^2p_k$ and $b_k:=t_{k}\eta_{k}$, $k\in\bN_+$. It is obvious that $\{t_{k-1}^2p_k\}_{k\in\bN_+}$ has a lower bound. By Item $(ii)$ and $(iii)$ of Momentum-Condition, there exist $c\in\bR_+$ and $K_1\in\bN_+$ such that $t_k\leq ct_{k-1}$ and $t_{k-1}\geq1$ for all $k>K_1$, which together with Proposition \ref{prop_tkobj} give that
$$
\sum_{k=K_1+1}^{\infty}t_{k}\eta_{k}\leq\sum_{k=K_1+1}^{\infty}ct_{k-1}\eta_{k}<+\infty,
$$
that is, $\sum_{k=1}^{\infty}t_{k}\eta_{k}<+\infty$. It remains to be shown that there exists $K\in\bN_+$ such that $t_{k}\eta_{k}\geq0$ and
\begin{equation}\label{tktauketakneq1}
t_k^2p_{k+1}-t_{k-1}^2p_k\leq t_{k}\eta_{k},\ \ \mbox{for all}\ \ k>K.
\end{equation}

Using Item $(ii)$ of Momentum-Condition again, there exists $K>K_1$ such that $t_{k-1}^2>t_{k}(t_{k}-1)\geq0$ for all $k>K$. The nonnegativity of $t_{k}\eta_{k}$ for $k>K$ can be obtained by $t_{k}\geq1$ immediately. We notice from the proof of Proposition \ref{prop_tkloc2} that \eqref{ineq1_tauketak} holds for all $k\in\bN_+$. The inequality $t_{k-1}\geq1$ also gives that $(t_{k-1}-1)^2<t_{k-1}^2$ for all $k>K$, which together with \eqref{ineq1_tauketak} implies
$$
t_k^2(\eta_{k+1}+\tau_{k+1})\leq t_k^2\eta_k+t_{k-1}^2\tau_k,
$$
that is,
\begin{equation}\label{etatauineq2}
t_k^2\tau_{k+1}-t_{k-1}^2\tau_k\leq t_k^2(\eta_{k}-\eta_{k+1}),\ \ \mbox{for all}\ \ k>K.
\end{equation}
In addition, we obtain from the fact $t_{k-1}^2>t_{k}(t_{k}-1)>0$ that
\begin{equation}\label{tketakeq1}
t_{k}^2\eta_{k+1}-t_{k-1}^2\eta_{k}\leq t_k^2(\eta_{k+1}-\eta_{k})+t_k\eta_{k},\ \ \mbox{for all}\ \ k>K.
\end{equation}
Adding the two inequalities \eqref{etatauineq2} and \eqref{tketakeq1} yields \eqref{tktauketakneq1}. We have now completed the proof that $\lim_{k\to\infty}t_{k-1}^2p_k$ exists.

Next, we prove that $\lim_{k\to\infty}t_{k-1}^2p_k=0$ holds by contradiction. Suppose that $\lim_{k\to\infty}t_{k-1}^2p_k\neq0$. Then there must exist some $s>0$ such that $\lim_{k\to\infty}t_{k-1}^2p_k=s$, since $p_k\geq0$ for all $k\in\bN_+$. This implies that there exists $K_2>K_1$ such that $t_{k-1}^2p_k>\frac{s}{2}$ and $t_{k-1}\geq1$ for all $k>K_2$. As a result, we have that
$$
\sum_{k=K_2+1}^\infty t_{k-1}p_k=\sum_{k=K_2+1}^\infty\frac{1}{t_{k-1}}\cdot t_{k-1}^2p_k>\frac{s}{2}\sum_{k=K_2+1}^\infty\frac{1}{t_{k-1}},
$$
which tends to $+\infty$ by Item $(iv)$ of Momentum-Condition. However, it follows from Proposition \ref{prop_tkobj} and Proposition \ref{prop_tkloc2} that $\sum_{k=1}^\infty t_{k-1}p_k<+\infty$. We have thus reached a contradiction. This completes the proof.
\end{proof}

With Proposition \ref{prop_limlocobj}, we are able to establish the convergence rate. To further prove the convergence of the iterative sequence, we also need the following lemma.
\begin{lemma}\label{lem_2condconvFP}
Let $\mT:\bRn\to\bRn$ be a nonexpansive operator such that it has at least one fixed point. If sequence $\{v^k\}_{k\in\bN_0}\subset\bRn$ satisfies the following two conditions:
\begin{itemize}
\item[$(i)$] $\lim_{k\to\infty}\|\mT v^k-v^k\|=0$,
\item[$(ii)$] $\lim_{k\to\infty}\|v^k-v^*\|$ exists for any fixed point $v^*$ of $\mT$,
\end{itemize}
then $\{v^k\}_{k\in\bN_0}$ converges to a fixed point of $\mT$.
\end{lemma}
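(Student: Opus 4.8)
The plan is to follow the classical Opial-type argument for nonexpansive operators, adapted to the finite-dimensional setting in which the demiclosedness principle reduces to mere continuity. The proof splits into three stages: establishing boundedness of the sequence, identifying each accumulation point as a fixed point, and then upgrading subsequential convergence to convergence of the whole sequence.

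First I would extract boundedness from condition $(ii)$. Since $\mT$ has at least one fixed point, fix one such $v^*$; then $\{\|v^k-v^*\|\}$ is a convergent (hence bounded) sequence of reals, so $\{v^k\}$ is bounded in $\bRn$. By the Bolzano--Weierstrass theorem, $\{v^k\}$ therefore admits at least one convergent subsequence.

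Next I would show that every accumulation point of $\{v^k\}$ lies in $\Fix(\mT)$. Let $\{v^{k_j}\}$ be a subsequence with $v^{k_j}\to\bar v$ for some $\bar v\in\bRn$. Because $\mT$ is nonexpansive it is Lipschitz continuous, so $\mT v^{k_j}\to\mT\bar v$; combining this with $v^{k_j}\to\bar v$ and condition $(i)$, which forces $\|\mT v^{k_j}-v^{k_j}\|\to0$, gives $\mT\bar v=\bar v$, i.e. $\bar v\in\Fix(\mT)$. This is the only place where nonexpansiveness of $\mT$ is used, and in $\bRn$ it is simply continuity. I expect this to be the conceptual heart of the statement rather than a real obstacle here: in a general Hilbert space this very step would require the genuine demiclosedness principle for $\mathcal{I}-\mT$, whereas the finite dimension trivializes it.

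Finally I would invoke condition $(ii)$ a second time, now with the specific fixed point $v^*=\bar v$ produced above, to conclude that $\lim_{k\to\infty}\|v^k-\bar v\|$ exists. Along the chosen subsequence we already have $\|v^{k_j}-\bar v\|\to0$, so the value of this existing limit must be $0$; hence $\|v^k-\bar v\|\to0$, which is precisely the assertion that $\{v^k\}$ converges to the fixed point $\bar v$. The decisive ingredient is condition $(ii)$: on its own, boundedness only yields a convergent subsequence, and the role of $(ii)$ is exactly to pin down the limit of $\|v^k-\bar v\|$ and thereby promote that subsequential limit to the limit of the entire sequence.
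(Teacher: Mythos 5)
Your proof is correct and follows essentially the same route as the paper's: boundedness of $\{v^k\}$ from condition $(ii)$, extraction of a convergent subsequence, identification of its limit as a fixed point via nonexpansiveness (continuity) together with condition $(i)$, and a second application of condition $(ii)$ with that fixed point to promote subsequential convergence to convergence of the whole sequence. The remarks on demiclosedness versus finite-dimensional continuity are a nice aside but do not change the argument.
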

\begin{proof}
We know from Item $(ii)$ that $\{v^k\}_{k\in\bN_0}$ is bounded. Hence there exists a subsequence $\{v^{k_j}\}_{j\in\bN_+}$ of $\{v^k\}_{k\in\bN_0}$ converging to some $\hat{v}\in\bRn$. We next prove that $\hat{v}$ is a fixed point of $\mT$. By the nonexpansiveness of $\mT$, we have that
$$
\lim_{j\to\infty}\|\mT\hat{v}-\mT v^{k_j}\|\leq\lim_{j\to\infty}\|\hat{v}-v^{k_j}\|=0,
$$
which implies that $\mT\hat{v}=\lim_{j\to\infty}\mT v^{k_j}$. This together with Item $(i)$ implies that
$$
\mT\hat{v}-\hat{v}=\lim_{j\to\infty}(\mT v^{k_j}-v^{k_j})={\bm0},
$$
that is, $\hat{v}$ is a fixed point of $\mT$. Now using Item $(ii)$ again with $v^*=\hat{v}$, we conclude that
$$
\lim_{k\to\infty}\|v^k-\hat{v}\|=\lim_{j\to\infty}\|v^{k_j}-\hat{v}\|=0,
$$
which completes the proof.
\end{proof}

We are now in a position to prove a theorem that is more general than Theorem \ref{mainthm}. We shall show that both the FV-convergence rate and the DCI-convergence rate of the sequence generated by AFBA with $\theta_k=\frac{t_{k-1}-1}{t_k}$ depend on the order of $t_{k-1}$ when $\{t_k\}_{k\in\bN_0}$ satisfies Momentum-Condition.

\begin{theorem}\label{maingeneralthm}
Suppose that $\theta_k=\frac{t_{k-1}-1}{t_{k}}$, $k\in\bN_+$, where $\{t_k\}_{k\in\bN_0}\subset\bR$ satisfies Momentum-Condition. Then the following hold:
\begin{itemize}
\item[$(i)$] $\|x^{k}-x^{k-1}\|=o\left(\frac{1}{t_{k-1}}\right)$,
\item[$(ii)$] $F(x^k)-F(x^*)=o\left(\frac{1}{t_{k-1}^2}\right)$,
\item[$(iii)$] $\{x^k\}_{k\in\bN_+}$ converges to a minimizer of $F$.
\end{itemize}
\end{theorem}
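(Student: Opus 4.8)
The plan is to read off facts $(i)$ and $(ii)$ directly from Proposition \ref{prop_limlocobj}, and to obtain the convergence in $(iii)$ by applying Lemma \ref{lem_2condconvFP} to the sequence $v^k:=x^k$ and the operator $T$, which is nonexpansive (being averaged) and possesses a fixed point since a minimizer of $F$ exists. For $(i)$ and $(ii)$, note that $\tau_k\geq0$ and $\eta_k\geq0$, so Proposition \ref{prop_limlocobj} forces both $t_{k-1}^2\eta_k\to0$ and $t_{k-1}^2\tau_k\to0$. Recalling the definitions in \eqref{def_etaktauk}, the former is exactly $F(x^k)-F(x^*)=o(1/t_{k-1}^2)$, giving $(ii)$, while the latter reads $t_{k-1}^2\|x^k-x^{k-1}\|^2\to0$, i.e. $\|x^k-x^{k-1}\|=o(1/t_{k-1})$, giving $(i)$.

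For $(iii)$, Condition $(i)$ of Lemma \ref{lem_2condconvFP} is the easy one: using the triangle inequality, nonexpansiveness of $T$, the update $x^{k+1}=Ty^k$, and $y^k-x^k=\theta_k(x^k-x^{k-1})$, I would bound $\|Tx^k-x^k\|\leq\|Tx^k-Ty^k\|+\|Ty^k-x^k\|\leq\theta_k\|x^k-x^{k-1}\|+\|x^{k+1}-x^k\|$. Since $\theta_k\leq c_2$ by Item $(iii)$ of Momentum-Condition and $t_{k-1}\to+\infty$ by Item $(iv)$, fact $(i)$ drives both increments to zero, so $\|Tx^k-x^k\|\to0$.

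The crux of the whole argument is Condition $(ii)$ of Lemma \ref{lem_2condconvFP}, namely that $\lim_{k\to\infty}\|x^k-x^*\|$ exists for every fixed point $x^*$. Here I would set $h_k:=\|x^k-x^*\|^2$ and first derive the second-order difference inequality
\[
h_{k+1}-h_k\leq\theta_k(h_k-h_{k-1})+\mu_k,\qquad \mu_k:=2\beta\theta_k(1+\theta_k)\tau_k.
\]
This comes from applying Lemma \ref{lemma_FzFx} with $x=x^*$, $y=y^k$, $z=x^{k+1}$ to obtain $h_{k+1}\leq\|y^k-x^*\|^2-2\beta\eta_{k+1}\leq\|y^k-x^*\|^2$, then expanding $y^k-x^*=(1+\theta_k)(x^k-x^*)-\theta_k(x^{k-1}-x^*)$ via the identity $\|(1+\theta)u-\theta v\|^2=(1+\theta)\|u\|^2-\theta\|v\|^2+\theta(1+\theta)\|u-v\|^2$ and recalling $\|x^k-x^{k-1}\|^2=2\beta\tau_k$. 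Passing to positive parts $\delta_k:=(h_k-h_{k-1})_+$ and using $\theta_k\geq0$, $\mu_k\geq0$, this yields $\delta_{k+1}\leq\theta_k\delta_k+\mu_k$.

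The decisive step is then to multiply the last inequality by $t_k$ and use $t_k\theta_k=t_{k-1}-1$, which gives $t_k\delta_{k+1}\leq t_{k-1}\delta_k-\delta_k+t_k\mu_k$, that is $\delta_k\leq t_{k-1}\delta_k-t_k\delta_{k+1}+t_k\mu_k$. Summing this telescoping bound over $k$ and discarding the nonnegative tail $t_M\delta_{M+1}$, I would conclude $\sum_k\delta_k<+\infty$, provided $\sum_k t_k\mu_k<+\infty$; the latter follows because $\theta_k$ is bounded and $t_k\leq c_1^{-1}t_{k-1}$ (Item $(iii)$ of Momentum-Condition), so $t_k\mu_k\leq\mathrm{const}\cdot t_{k-1}\tau_k$, and $\sum_k t_{k-1}\tau_k<+\infty$ by Proposition \ref{prop_tkloc2}. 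Finally, Lemma \ref{lem_existlim} applied with $a_k=h_k$ (bounded below by $0$) and $b_k=\delta_k$ (nonnegative, summable, and satisfying $h_k-h_{k-1}\leq\delta_k$) shows that $\lim_k h_k$, hence $\lim_k\|x^k-x^*\|$, exists. Lemma \ref{lem_2condconvFP} then delivers convergence of $\{x^k\}$ to a fixed point of $T$, which is a minimizer of $F$ by Proposition \ref{thm_minequiFP}. I expect the main difficulty to lie precisely in recognizing the $t_k$-multiplication trick that converts the merely bounded recursion $\delta_{k+1}\leq\theta_k\delta_k+\mu_k$ (note $\theta_k\to1$, so no uniform contraction is available) into a telescoping sum yielding summability of $\{\delta_k\}$.
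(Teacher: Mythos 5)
Your proposal is correct, and for the parts that matter it is genuinely sound: items $(i)$ and $(ii)$ and the verification of Condition $(i)$ of Lemma \ref{lem_2condconvFP} coincide with the paper's argument, but your treatment of the crux --- the existence of $\lim_{k\to\infty}\|x^k-x^*\|$ --- takes a different route. The paper never touches the second-order difference inequality on $h_k=\|x^k-x^*\|^2$; instead it recycles machinery already in place: by Fact $(i)$ of Proposition \ref{prop_facts} the quantity $\varepsilon_k=2\beta t_{k-1}^2\eta_k+\|z^k-x^*\|^2$ has a limit, and since $2\beta t_{k-1}^2\eta_k\to0$ by \eqref{eq_limDCIFV0}, the limit of $\|z^k-x^*\|$ exists; then the identity $z^{k+1}=t_k(x^{k+1}-x^k)+x^k$ from \eqref{eq_zkp1tkxk} together with $r_{k+1}=|t_k|\,\|x^{k+1}-x^k\|\to0$ (again from \eqref{eq_limDCIFV0}) transfers this limit to $\|x^k-x^*\|$ via the two-sided bound \eqref{neq_zkdomxk}. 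Your argument instead runs the classical inertial quasi-Fej\'er scheme (in the spirit of Chambolle--Dossal and Attouch--Peypouquet): derive $h_{k+1}-h_k\leq\theta_k(h_k-h_{k-1})+\mu_k$ from Lemma \ref{lemma_FzFx}, pass to positive parts $\delta_k$, multiply by $t_k$ so that $t_k\theta_k=t_{k-1}-1$ produces the telescoping bound $\delta_k\leq t_{k-1}\delta_k-t_k\delta_{k+1}+t_k\mu_k$, and conclude $\sum_k\delta_k<+\infty$ from $\sum_k t_{k-1}\tau_k<+\infty$ (Proposition \ref{prop_tkloc2}), whence Lemma \ref{lem_existlim} applies with $a_k=h_k$, $b_k=\delta_k$. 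I verified the algebra (the expansion identity for $\|(1+\theta)u-\theta v\|^2$, the positive-part step, and the summability estimate $t_k\mu_k\leq\mathrm{const}\cdot t_{k-1}\tau_k$) and it is all sound, with the minor caveat that the positive-part step and the discarding of $t_M\delta_{M+1}$ require $\theta_k\geq0$ and $t_k>0$, which hold only for sufficiently large $k$ by Items $(ii)$ and $(iv)$ of Momentum-Condition --- so the telescoping sum must start from a large enough index, as your phrasing implicitly allows. What each approach buys: the paper's route is shorter because the Lyapunov quantity $\varepsilon_k$ and the auxiliary sequence $z^k$ were already built for the rate analysis, so Fejér-type monotonicity of $\|x^k-x^*\|$ comes nearly for free; your route is self-contained at this step (no appeal to $z^k$ or $\varepsilon_k$), makes explicit that the only summability input needed is $\sum_k t_{k-1}\tau_k<+\infty$, and isolates the genuinely nontrivial trick --- converting a non-contractive recursion ($\theta_k\to1$) into a telescoping one by multiplying by $t_k$ --- which is exactly the mechanism the paper's Proposition \ref{prop_facts} hides inside its own telescoping of $\varepsilon_k$.
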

\begin{proof}
We first prove Item $(i)$ and $(ii)$ together by employing Proposition \ref{prop_limlocobj}. Since $\{t_k\}_{k\in\bN_0}$ satisfies Momentum-Condition, we know from Proposition \ref{prop_limlocobj} that
$$
\lim_{k\to\infty}t_{k-1}^2\left(\tau_k+\eta_k\right)=0.
$$
Recalling the definitions of $\tau_k$ and $\eta_k$ in \eqref{def_etaktauk}, we see that
\begin{equation}\label{eq_limDCIFV0}
\lim_{k\to\infty}{t_{k-1}^2}\|x^k-x^{k-1}\|^2=0\ \ \mbox{and}\ \
\lim_{k\to\infty}{t_{k-1}^2}\left(F(x^k)-F(x^*)\right)=0.
\end{equation}
Then Item $(i)$ and $(ii)$ of this theorem follow from \eqref{eq_limDCIFV0} and the fact $\lim_{k\to\infty}t_k=+\infty$ in Momentum-Condition.

We next employ Lemma \ref{lem_2condconvFP} to prove Item $(iii)$. As mentioned in section \ref{sec_AFBA}, $T$ is averaged nonexpansive, and hence nonexpansive. According to Lemma \ref{lem_2condconvFP}, it suffices to show that $\lim_{k\to\infty}\|Tx^k-x^k\|=0$ and $\lim_{k\to\infty}\|x^k-x^*\|$ exists for any fixed point $x^*$ of $T$, which shall be presented as follows.

It is easy to see from Momentum-Condition that $\{\theta_k\}_{k\in\bN_+}$ is bounded. In addition, it follows from the first equality in \eqref{eq_limDCIFV0} that $\lim_{k\to\infty}\|x^{k}-x^{k-1}\|=0$. These together with the nonexpansiveness of $T$ yield that
\begin{align*}
\lim_{k\to\infty}\|Tx^k-x^{k+1}\|&=\lim_{k\to\infty}\|Tx^k-T(x^{k}+\theta_k(x^{k}-x^{k-1}))\|\\
&\leq\lim_{k\to\infty}|\theta_k|\|x^k-x^{k-1}\|=0.
\end{align*}
Hence
$$
\lim_{k\to\infty}\|Tx^k-x^k\|\leq\lim_{k\to\infty}\left(\|Tx^k-x^{k+1}\|+\|x^{k+1}-x^{k}\|\right)=0,
$$
which implies that $\lim_{k\to\infty}\|Tx^k-x^k\|=0$.

Let $x^*$ be any fixed point of $T$. It remains to be shown that $\lim_{k\to\infty}\|x^k-x^*\|$ exists. To this end, we prove the existence of $\lim_{k\to\infty}\|z^k-x^*\|$, where $\{z_k\}_{k\in\bN_+}$ is defined by \eqref{def_zk}. We know from Fact $(i)$ of Proposition \ref{prop_facts} and the second equality in \eqref{eq_limDCIFV0} that both $\lim_{k\to\infty}\varepsilon_k$ and $\lim_{k\to\infty}2\beta t_{k-1}^2\eta_k$ exist, where $\varepsilon_k$ is defined by \eqref{def_epsilonk}. Hence $\lim_{k\to\infty}\|z^k-x^*\|$ exists. We now prove the existence of $\lim_{k\to\infty}\|x^k-x^*\|$. From \eqref{eq_zkp1tkxk} in the proof of Proposition \ref{prop_facts}, we see that $z^{k+1}=t_{k}(x^{k+1}-x^{k})+x^{k}$. Letting $r_k:=|t_{k-1}|\|x^k-x^{k-1}\|$ for $k\in\bN_+$ and using the triangle inequality, we have
$$
\|x^{k}-x^*\|-r_{k+1}\leq\|z^{k+1}-x^*\|\leq \|x^{k}-x^*\|+r_{k+1},
$$
that is,
\begin{equation}\label{neq_zkdomxk}
\|z^{k+1}-x^*\|-r_{k+1}\leq\|x^{k}-x^*\|\leq\|z^{k+1}-x^*\|+r_{k+1},\ \ k\in\bN_+.
\end{equation}
We also see from the first equality in \eqref{eq_limDCIFV0} that $\lim_{k\to\infty}r_k=0$. Now, the inequalities in \eqref{neq_zkdomxk} together with the existence of $\lim_{k\to\infty}\|z^k-x^*\|$ and the fact $\lim_{k\to\infty}r_k=0$ imply that $\lim_{k\to\infty}\|x^k-x^*\|$ exists. Therefore, Item $(iii)$ of this theorem follows from Lemma \ref{lem_2condconvFP} and Proposition \ref{thm_minequiFP}.
\end{proof}

We next show that $\{t_k\}_{k\in\bN_0}$ given in \eqref{ourthetak} satisfies Momentum-Condition. For this purpose, we need the fact that
\begin{equation}\label{lem_kpowomega}
\lim_{k\to\infty}\frac{k^{\omega}-(k-1)^{\omega}}{k^{\omega-1}}=\omega,\ \ \omega\in\bR,
\end{equation}
which can be verified by using L'Hopital's Rule.

\begin{proposition}\label{prop_tkmomcond}
Let $t_k:=ak^\omega+b$, $k\in\bN_0$, where $\omega\in(0,1]$, $a\in\bR$ and $b\in\bR\backslash\left\{-ak^\omega:k\in\bN_+\right\}$. If either $\omega\in(0,1)$, $a\in\bR_+$ or $\omega=1$, $a\in\left(0,\frac{1}{2}\right)$ holds, then $\{t_k\}_{k\in\bN_0}$ satisfies Momentum-Condition.
\end{proposition}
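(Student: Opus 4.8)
The plan is to verify the four items of \emph{Momentum-Condition} one at a time for $t_k=ak^\omega+b$. Items $(i)$, $(iii)$ and $(iv)$ are essentially immediate from the growth $t_k\sim ak^\omega$, so the real content lies in Item $(ii)$, where the defining inequality \eqref{neq_momcondtk1} must be checked and where the distinction between $\omega\in(0,1)$ and $\omega=1$ (together with the restriction $a<\tfrac12$ in the latter case) becomes essential.

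For Item $(i)$, the hypothesis $b\in\bR\backslash\{-ak^\omega:k\in\bN_+\}$ is precisely the statement that $t_k=ak^\omega+b\neq0$ for every $k\in\bN_+$, so nothing further is needed. For Item $(iv)$, since $a>0$ and $\omega>0$ we have $t_k=ak^\omega+b\to+\infty$; moreover $t_k\sim ak^\omega$ gives $\tfrac{1}{t_k}\sim\tfrac{1}{ak^\omega}$, and because $\omega\le1$ the series $\sum_k k^{-\omega}$ diverges, whence $\sum_k\tfrac{1}{t_k}=+\infty$. For Item $(iii)$, I would compute $\tfrac{t_{k-1}}{t_k}=\tfrac{a(k-1)^\omega+b}{ak^\omega+b}\to1$ as $k\to\infty$ (using $(k-1)^\omega/k^\omega\to1$ and $t_k\to+\infty$); hence for all sufficiently large $k$ the ratio lies in, say, $(\tfrac12,2)$, which yields the two-sided bound with $c_1=\tfrac12$ and $c_2=2$.

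The heart of the proof is Item $(ii)$. The bound $1\le t_{k-1}$ holds for all large $k$ since $t_{k-1}\to+\infty$, so it remains to produce $\rho\in\bR_+$ with $t_{k-1}<\rho[t_{k-1}^2-t_k(t_k-1)]$ eventually. I would first rewrite the bracket as
$$
t_{k-1}^2-t_k(t_k-1)=(t_{k-1}-t_k)(t_{k-1}+t_k)+t_k,
$$
and then use $t_{k-1}-t_k=-a\big(k^\omega-(k-1)^\omega\big)$ together with the asymptotic fact \eqref{lem_kpowomega}, which gives $k^\omega-(k-1)^\omega\sim\omega k^{\omega-1}$. Since $t_{k-1}+t_k\sim2ak^\omega$ and $t_k\sim ak^\omega$, the bracket behaves like $-2a^2\omega\,k^{2\omega-1}+ak^\omega$. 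Here the comparison of exponents is decisive: because $2\omega-1\le\omega$ with equality only at $\omega=1$, in the case $\omega\in(0,1)$ the term $ak^\omega$ dominates and the bracket is asymptotic to $ak^\omega$, while $t_{k-1}\sim ak^\omega$ as well, so $t_{k-1}/[\,t_{k-1}^2-t_k(t_k-1)\,]\to1$ and any $\rho>1$ works.

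The only delicate case, and the step I expect to be the main obstacle, is $\omega=1$, where both terms in the bracket have order $k$ and the cancellation must be tracked exactly rather than asymptotically. Writing $t_k=ak+b$ I would compute the bracket explicitly as $a(1-2a)k+(a^2-2ab+b)$; the restriction $a\in(0,\tfrac12)$ guarantees the leading coefficient $a(1-2a)$ is strictly positive, so the bracket grows linearly and $t_{k-1}/[\,t_{k-1}^2-t_k(t_k-1)\,]\to\tfrac{1}{1-2a}\in\bR_+$. Choosing any $\rho>\tfrac{1}{1-2a}$ then secures the inequality for large $k$. This also explains why $a<\tfrac12$ is sharp: at $a=\tfrac12$ the leading coefficient vanishes, the bracket stays bounded, and no finite $\rho$ can dominate the diverging ratio. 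Having verified all four items, I would conclude that $\{t_k\}_{k\in\bN_0}$ satisfies \emph{Momentum-Condition}.
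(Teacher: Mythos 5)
Your proof is correct, and its skeleton coincides with the paper's: Items $(i)$, $(iii)$, $(iv)$ follow from the growth of $t_k$, and Item $(ii)$ is reduced, via the same algebraic identity $t_{k-1}^2-t_k(t_k-1)=t_k-(t_k-t_{k-1})(t_k+t_{k-1})$ and the same limit fact \eqref{lem_kpowomega}, to a case analysis on $\omega\in(0,1)$ versus $\omega=1$. The execution differs in a worthwhile way, however. The paper chases explicit constants: for $\omega\in(0,1)$ it uses $0<t_k-t_{k-1}<\frac{1}{4}$ and $1\leq\frac{t_k}{t_{k-1}}\leq2$ to certify the fixed choice $\rho=4$, and for $\omega=1$ it reparametrizes $a$ through $\varepsilon=\frac{1-2a}{2-a}$ and grinds out the lower bound $\varepsilon=\frac{1}{\rho}$ from $\frac{t_k}{t_{k-1}}>1-\varepsilon$. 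You instead compute the exact limit of the ratio $t_{k-1}/\left[t_{k-1}^2-t_k(t_k-1)\right]$: it equals $1$ when $\omega\in(0,1)$, and equals $\frac{1}{1-2a}$ when $\omega=1$ because the bracket expands exactly to $a(1-2a)k+(a^2-2ab+b)$; any $\rho$ strictly above the limit then works. Your route is cleaner, makes the role of the hypothesis $a<\frac{1}{2}$ completely transparent (positivity of the leading coefficient $a(1-2a)$), and yields a bonus the paper's argument does not state: at $a=\frac{1}{2}$, $\omega=1$, the bracket is identically $\frac{1}{4}$ while $t_{k-1}\to+\infty$, so \eqref{neq_momcondtk1} fails for every finite $\rho$, showing the threshold is sharp and consistent with the Nesterov choice $t_k(t_k-1)=t_{k-1}^2$ being exactly the borderline case treated separately in Corollary \ref{cor_FISTA}. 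One minor difference of bookkeeping: for Item $(iii)$ you get $(c_1,c_2)=(\frac{1}{2},2)$ from $\frac{t_{k-1}}{t_k}\to1$, whereas the paper notes $t_{k-1}\leq t_k$ directly and only works for the lower bound; both are fine since Momentum-Condition only asks for existence of such constants.
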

\begin{proof}
It is obvious that Item $(i)$ and $(iv)$ of Momentum-Condition hold for any $\omega\in(0,1]$ and $a\in\bR_+$. Next, we prove that $\{t_k\}_{k\in\bN_0}$ satisfies Item $(iii)$.

It follows from \eqref{lem_kpowomega} that for any $\omega\in(0,1]$, there exists $K_1\in\bN_+$ such that
\begin{equation}\label{kpowomegaleq2}
0<k^{\omega}-(k-1)^{\omega}<2\ \ \mbox{for all}\ \ k>K_1.
\end{equation}
Let $c_1=\frac{1}{2}$, $c_2=1$. It is obvious that $t_{k-1}\leq c_2t_k$ for all $k\in\bN_+$. Setting $K_2=1+\left\lceil\left|2-\frac{b}{a}\right|^\frac{1}{\omega}\right\rceil$ and $K_3=\max\left\{K_1,K_2\right\}$, by \eqref{kpowomegaleq2}, we have that for all $k>K_3$,
\begin{align*}
2t_{k-1}-t_k&=t_{k-1}-a(k^\omega-(k-1)^\omega)\\
&>a(K_2-1)^\omega+b-2a\geq0,
\end{align*}
which implies that $t_{k-1}\geq c_1t_k$. As a result, for any $\omega\in(0,1]$ and $a\in\bR_+$, $c_1t_{k}\leq t_{k-1}\leq c_2t_{k}$ holds for all $k>K_3$.

It remains to be shown the validity of Item $(ii)$ of Momentum-Condition. For any $\omega\in(0,1]$ and $a\in\bR_+$, we let $K_4=1+\left\lceil\left|\frac{1-b}{a}\right|^\frac{1}{\omega}\right\rceil$. Then for all $k>K_4$,
$$
t_{k-1}\geq a(K_4-1)^\omega+b\geq|1-b|+b\geq1.
$$
To complete the proof, it suffices to show that there exist $\rho\in\bR_+$ and $K\geq K_4$ such that
$$
t_{k-1}<\rho\left[t_{k-1}^2-t_k(t_k-1)\right],
$$
that is,
\begin{equation}\label{tkm1lesrho}
\frac{t_k}{t_{k-1}}-\left(\frac{t_k}{t_{k-1}}+1\right)(t_k-t_{k-1})>\frac{1}{\rho},\ \ \mbox{for all}\ \ k>K.
\end{equation}
It has been shown that for any $\omega\in(0,1]$ and $a\in\bR_+$, $\frac{1}{2}t_{k}\leq t_{k-1}\leq t_{k}$, that is, $1\leq\frac{t_k}{t_{k-1}}\leq2$ holds for all $k>K_3$. If $0<w<1$, then for any $a\in\bR_+$, it follows from \eqref{lem_kpowomega} that
$$
\lim_{k\to\infty}(t_k-t_{k-1})=\lim_{k\to\infty}a(k^\omega-(k-1)^\omega)=0,
$$
which implies that there exists $K_5\in\bN_+$ such that $0<t_k-t_{k-1}<\frac{1}{4}$ for all $k>K_5$. Now by setting $\rho=4$ and $K=\max\{K_3,K_4,K_5\}$, inequality \eqref{tkm1lesrho} holds.

If $\omega=1$ and $a\in\left(0,\frac{1}{2}\right)$, then $t_{k}-t_{k-1}=a$. Let $\varepsilon=\frac{1-2a}{2-a}$. Then $a=\frac{1-2\varepsilon}{2-\varepsilon}$ and $\varepsilon\in\left(0,\frac{1}{2}\right)$. We note that $\lim_{k\to\infty}\frac{t_k}{t_{k-1}}=1$. Hence there exists $K_6\in\bN_+$ such that $\frac{t_k}{t_{k-1}}>1-\varepsilon$ for all $k>K_6$. Now by setting $\rho=\frac{1}{\varepsilon}$ and $K=\max\{K_4,K_6\}$, we have that for all $k>K$,
\begin{align*}
\frac{t_k}{t_{k-1}}-\left(\frac{t_k}{t_{k-1}}+1\right)(t_k-t_{k-1})&=\frac{t_k}{t_{k-1}}-a\left(\frac{t_k}{t_{k-1}}+1\right)\\
&>(1-a)(1-\varepsilon)-a\\
&=(1-\varepsilon)-(2-\varepsilon)a\\
&=\varepsilon=\frac{1}{\rho},
\end{align*}
which completes the proof.
\end{proof}

We are now easy to see that Theorem \ref{mainthm} is a direct result of Theorem \ref{maingeneralthm} and Proposition \ref{prop_tkmomcond}.

\begin{proof}[Proof of Theorem \ref{mainthm}]
It follows from Proposition \ref{prop_tkmomcond} that $\{t_k\}_{k\in\bN_0}$ given by \eqref{ourthetak} satisfies Momentum-Condition if either $\omega\in(0,1)$, $a\in\bR_+$  or $\omega=1$, $a\in\left(0,\frac{1}{2}\right)$ holds. Then Theorem \ref{mainthm} follows from Theorem \ref{maingeneralthm} immediately.
\end{proof}

From Theorem \ref{maingeneralthm}, we see that the convergence rate depends on the order of $t_{k-1}$. To close this section, we present a proposition showing that a higher order setting of $\{t_k\}_{k\in\bN_0}$ by $t_k:=ak^{\omega}+b$ for $\omega>1$ does not satisfy the second inequality of \eqref{neq_momcondtk1} in Momentum-Condition.

\begin{proposition}\label{prop_tkpowalpha}
Let $t_k:=ak^\omega+b$, $k\in\bN_0$, where $a\neq0$ and $\omega,b\in\bR$. If $\omega>1$, then
$$
\lim_{k\to\infty}t_{k-1}^2-t_k(t_k-1)=-\infty.
$$
\end{proposition}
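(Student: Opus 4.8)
The plan is to expand the quantity $t_{k-1}^2-t_k(t_k-1)$ directly using the definition $t_k=ak^\omega+b$, and show that the dominant term grows like $-a^2 k^{2\omega-2}\cdot k^\omega$ (roughly), which diverges to $-\infty$ when $\omega>1$. First I would write out
\begin{equation*}
t_{k-1}^2-t_k(t_k-1)=t_{k-1}^2-t_k^2+t_k=-(t_k-t_{k-1})(t_k+t_{k-1})+t_k.
\end{equation*}
This factored form is the crucial reorganization: it isolates the product of the \emph{increment} $t_k-t_{k-1}=a\bigl(k^\omega-(k-1)^\omega\bigr)$ with the \emph{sum} $t_k+t_{k-1}$, plus a lower-order remainder $t_k$.

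Next I would estimate the growth order of each factor. The sum satisfies $t_k+t_{k-1}\sim 2ak^\omega$, so it grows like $k^\omega$. For the increment, I would invoke the limit \eqref{lem_kpowomega}, which gives $k^\omega-(k-1)^\omega\sim\omega k^{\omega-1}$; hence $t_k-t_{k-1}\sim a\omega k^{\omega-1}$, which grows like $k^{\omega-1}$ (note this tends to $+\infty$ precisely because $\omega>1$). Multiplying, the product $(t_k-t_{k-1})(t_k+t_{k-1})$ grows like $2a^2\omega k^{2\omega-1}$, an order strictly larger than the remainder term $t_k\sim a k^\omega$ since $2\omega-1>\omega$ for $\omega>1$. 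Therefore the negative leading term dominates.

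To make this rigorous without case-splitting on the sign of $a$, I would note $a\neq0$ so $a^2>0$, and argue that
\begin{equation*}
t_{k-1}^2-t_k(t_k-1)=-a^2\bigl(k^\omega-(k-1)^\omega\bigr)(k^\omega+(k-1)^\omega)+t_k+O(1),
\end{equation*}
then divide through by $k^{2\omega-1}$ and pass to the limit using \eqref{lem_kpowomega} together with $k^\omega/k^{2\omega-1}=k^{1-\omega}\to0$. The leading coefficient is $-2a^2\omega<0$, so the whole expression, being asymptotically $-2a^2\omega\,k^{2\omega-1}\to-\infty$, diverges to $-\infty$.

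The main obstacle I anticipate is purely bookkeeping rather than conceptual: ensuring the remainder terms (the linear $+t_k$, the constant $b$-contributions hidden in the sum, and the cross terms) are genuinely of order at most $k^\omega$, hence dominated by the $k^{2\omega-1}$ leading term once $\omega>1$. The cleanest way to dispatch this is to divide the entire expression by $k^{2\omega-1}$ at the outset and verify term-by-term that every piece except the leading product vanishes in the limit, leaving only the strictly negative constant $-2a^2\omega$; multiplying back by $k^{2\omega-1}\to+\infty$ then yields the claimed divergence to $-\infty$.
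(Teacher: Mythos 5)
Your proposal is correct and takes essentially the same route as the paper: both expand $t_{k-1}^2-t_k(t_k-1)=t_{k-1}^2-t_k^2+t_k$, invoke the limit \eqref{lem_kpowomega} to extract a dominant negative term of order $k^{2\omega-1}$ (the paper applies the lemma with exponent $2\omega$ to $k^{2\omega}-(k-1)^{2\omega}$, while you apply it with exponent $\omega$ after factoring the difference of squares), and conclude because every remaining term has order at most $k^\omega<k^{2\omega-1}$. One cosmetic slip: when $b\neq0$ the cross term $-2ab\bigl(k^\omega-(k-1)^\omega\bigr)$ in your displayed decomposition is $O(k^{\omega-1})$ rather than $O(1)$, but this is harmless since, as your own final step requires, it still vanishes after dividing by $k^{2\omega-1}$.
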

\begin{proof}
By the definition of $t_k$, we have that
\begin{align}
\notag &t_{k-1}^2-t_k(t_k-1)\\
\notag =&\left[a(k-1)^{\omega}+b\right]^2-(ak^{\omega}+b)^2+ak^{\omega}+b\\
\label{eq1_tkpowalpha}=&-a^2\left[k^{2\omega}-(k-1)^{2\omega}\right]-2ab\left[k^{\omega}-(k-1)^{\omega}\right]+ak^{\omega}+b
\end{align}
It follows from \eqref{lem_kpowomega} that there exists $K\in\bN_+$ such that
$$
k^{2\omega}-(k-1)^{2\omega}\geq\omega k^{2\omega-1},\ \ \mbox{for all}\ \ k>K.
$$
This together with \eqref{eq1_tkpowalpha} yields that for $k>K$,
$$
t_{k-1}^2-t_k(t_k-1)\leq-a^2\omega k^{2\omega-1}-2ab\left[k^{\omega}-(k-1)^{\omega}\right]+ak^{\omega}+b,
$$
which implies that $\lim_{k\to\infty}t_{k-1}^2-t_k(t_k-1)=-\infty$ since the first term on the right-hand side of the above inequality is the highest-order term with respect to $k$ and with a negative coefficient.
\end{proof}

\section{AFBA for smoothed hinge loss L1-SVM model}\label{sec_AFBAforSVM}
Support vector machine (SVM) is one of the most important methods for classification problems. As the data scale in real-world problems grows rapidly, it is crucial to propose efficient algorithms for solving SVM. In this section, we shall use AFBA with the proposed momentum scheme to solve the smoothed hinge loss $\ell_1$-SVM model. The smoothing of the hinge loss function in $\ell_1$-SVM leads to an optimization model of the form \eqref{model1}, so that the resulting model can be solved efficiently by AFBA while preserving the predictive accuracy.

We begin by introducing the original hinge loss $\ell_1$-SVM model. Let $\{(x^{(i)},y^{(i)}):i\in\bN_m\}\subset\bRn\times\{-1,1\}$ be a given training data set, where $\bN_m:=\{1,2,\ldots,m\}$. Let $\mK:\bRn\times\bRn\to\bR$ be a positive semi-definite kernel function, $K:=[\mK(x^{(i)},x^{(j)})]_{i,j=1}^m$ be the kernel matrix associated with $\mK$. The hinge loss function and $\ell_1$ norm are defined by $h_1(t):=\max(1-t,0)$ for $t\in\bR$, and $\|x\|_1:=\sum_{i=1}^{n}|x_i|$ for $x\in\bRn$, respectively. Then the SVM model with $\ell_1$ regularization ($\ell_1$-SVM) is given by
\begin{equation*}
\min_{\alpha\in\bR^{m},b\in\bR}\left\{\sum_{i=1}^{m}\mL(x^{(i)},y^{(i)},\alpha,b)+\lambda\|\alpha\|_1\right\},
\end{equation*}
where $\alpha\in\bR^{m}$ is a vector consisting of the linear combination coefficients, $b\in\bR$ is a bias term, $\lambda\in\bR_+$ is the regularization parameter, and the function $\mL$ in the fidelity term is defined by
\begin{equation*}
\mL(x^{(i)},y^{(i)},\alpha,b):=h_1\left(y^{(i)}\left(\sum_{j=1}^m\alpha_{j}\mK(x^{(j)},x^{(i)})+b\right)\right).
\end{equation*}
By letting
$$
w:=\left[\begin{array}{c}
\alpha\\
b
\end{array}\right]\in\bR^{m+1},\ \
{\bf1}_{m}:=[1,1,\ldots,1]^\top\in\bR^{m},
$$
$$
\tilde{K}:=\left[
\begin{array}{cc}
K & {\bf1}_m
\end{array}\right],\ \  Y:=\diag\{y^{(1)},\ldots,y^{(m)}\},\ \ B:=Y\tilde{K},
$$
\begin{equation}\label{tildeI}
\tilde{I}:=\left[\begin{array}{cc}
I_{m} &{\bf0}_{m}\\
{\bf0}_{m}^\top &0
\end{array}\right]
\end{equation}
and $h(u):=\sum_{i=1}^{m}h_1(u_i)$ for $u\in\bRm$, the $\ell_1$-SVM model can be rewritten by
\begin{equation}\label{L1-MKSVM2}
\min_{w\in\bR^{m+1}}\{h(Bw)+\lambda\|\tilde{I}w\|_1\}.
\end{equation}
The difficulty to develop an efficient algorithm for solving model \eqref{L1-MKSVM2} is that both the function $h$ and the $\ell_1$-norm in the model are nondifferentiable. To address this issue, we smooth the hinge loss function as follows:
\begin{equation}\label{tildeh1}
\tilde{h}_1(t):=\begin{cases}
(1-t)^2, &t<1,\\
0, &t\geq1.
\end{cases}
\end{equation}
It was mentioned in \cite{zhang2001text} that the squared hinge loss function defined by
\eqref{tildeh1} is a better loss function for capturing the heavy tailed distribution, which is more appropriate for classification problems. Now the smoothed hinge loss $\ell_1$-SVM (SHL-$\ell_1$-SVM) model we aim to solve is given by
\begin{equation}\label{SHL-L1-SVM1}
\min_{w\in\bR^{m+1}}\{\tilde{h}(Bw)+\lambda\|\tilde{I}w\|_1\},
\end{equation}
where $\tilde{h}(u):=\sum_{i=1}^{m}\tilde{h}_1(u_i)$ for $u\in\bRm$. Model \eqref{SHL-L1-SVM1} is an instance of Model \eqref{model1} with
\begin{equation}\label{fgforSHLL1SVM}
f=\tilde{h}\circ B\ \ \mbox{and}\ \ g=\lambda\|\cdot\|_1\circ\tilde{I}.
\end{equation}
Before employing AFBA to solve model \eqref{SHL-L1-SVM1}, we verify that function $f$ in \eqref{fgforSHLL1SVM} is convex and differentiable with a Lipschitz continuous gradient. For this purpose, we first show the convexity of $\tilde{h}_1$ and the Lipschitz continuity of its derivative.

\begin{lemma}\label{lemma_tildeh1}
Suppose that $\tilde{h}_1:\bR\to\bR$ is defined by \eqref{tildeh1}. Then $\tilde{h}_1$ is convex and differentiable with a 2-Lipschitz continuous derivative.
\end{lemma}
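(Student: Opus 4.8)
The plan is to compute the derivative of $\tilde{h}_1$ explicitly, deduce convexity from the monotonicity of that derivative, and then verify the Lipschitz bound by a direct case analysis. First I would differentiate on each of the two open pieces: for $t<1$ the chain rule gives $\tilde{h}_1'(t)=2(t-1)$, while for $t>1$ we have $\tilde{h}_1'(t)=0$. The delicate point is the junction $t=1$, so I would check differentiability there directly from the difference quotients. Since $\tilde{h}_1(1)=0$ and $\big(1-(1+s)\big)^2/s=s\to0$ as $s\to0^-$, the left derivative at $1$ is $0$, which matches the right derivative, so $\tilde{h}_1$ is differentiable at $1$ and
$$
\tilde{h}_1'(t)=\begin{cases}2(t-1), & t\leq1,\\ 0, & t\geq1.\end{cases}
$$

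Next I would establish convexity. Since $t\mapsto 2(t-1)$ is increasing and vanishes at $t=1$, while $\tilde{h}_1'\equiv0$ on $[1,+\infty)$, the derivative $\tilde{h}_1'$ is non-decreasing on all of $\bR$. For a differentiable function on $\bR$, a non-decreasing derivative is equivalent to convexity, hence $\tilde{h}_1$ is convex.

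Finally I would prove the $2$-Lipschitz continuity of $\tilde{h}_1'$, namely $|\tilde{h}_1'(s)-\tilde{h}_1'(t)|\leq 2|s-t|$ for all $s,t\in\bR$, by splitting on the position of $s$ and $t$ relative to $1$. When both lie in $(-\infty,1]$ the difference equals exactly $2|s-t|$ (so the constant $2$ is sharp); when both lie in $[1,+\infty)$ the difference vanishes. The only case requiring care is when $s$ and $t$ straddle the junction, say $s<1\leq t$: then $|\tilde{h}_1'(s)-\tilde{h}_1'(t)|=2(1-s)\leq 2(t-s)=2|s-t|$, where the inequality uses $t\geq1$. I expect this straddling case, together with the differentiability verification at $t=1$, to be the main obstacle, since each forces one to reconcile the two pieces across the junction; the remaining computations are routine piecewise differentiation.
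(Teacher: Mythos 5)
Your proposal is correct and follows essentially the same route as the paper: compute the piecewise derivative, deduce convexity from its monotonicity, and verify the $2$-Lipschitz bound by a three-case analysis (both points below $1$, both above, and the straddling case). The only difference is that you explicitly verify differentiability at the junction $t=1$ via difference quotients, a detail the paper's proof leaves implicit when it writes down the derivative formula.
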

\begin{proof}
To prove the convexity of $\tilde{h}_1$, it suffices to show that its derivative is monotonically increasing (see Exercise 14 in Chapter 5 of \cite{rudin1976principles}). It follows from the definition of $\tilde{h}_1$ that
$$
\tilde{h}_1^{'}(t)=\begin{cases}
2(t-1), &t<1,\\
0, &t\geq1,
\end{cases}
$$
which is monotonically increasing. We next show that $\tilde{h}_1^{'}$ is Lipschitz continuous. For any $t_{1},t_{2}\in\bR$, without loss of generality, we assume that $t_{1}\leq t_{2}$. Then we have
$$
\tilde{h}_1^{'}(t_1)-\tilde{h}_1^{'}(t_2)=\begin{cases}
2(t_{1}-t_{2}), &t_{1}\leq t_{2}<1,\\
2(t_{1}-1), &t_{1}<1\leq t_{2},\\
0, & 1\leq t_{1}\leq t_{2},
\end{cases}
$$
which implies that
$$
|\tilde{h}_1^{'}(t_1)-\tilde{h}_1^{'}(t_2)|\leq 2|t_1-t_2|,
$$
that is, $\tilde{h}_1^{'}$ is 2-Lipschitz continuous.
\end{proof}

\begin{proposition}\label{LipschitzConstant}
Suppose that $f:\bR^{m+1}\to\bR$ is defined in \eqref{fgforSHLL1SVM}. Then $f$ is convex and differentiable with a $2 \|B\|_2^2$-Lipschitz continuous gradient.
\end{proposition}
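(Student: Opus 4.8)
The plan is to exploit the composite structure $f=\tilde h\circ B$ with $B$ linear, reducing everything to properties of the separable outer function $\tilde h$ together with the chain rule. The key observations are that convexity is preserved under composition with a linear (affine) map, and that when the outer function has a Lipschitz gradient, the composite's gradient is Lipschitz with a constant inflated by the square of the operator norm $\|B\|_2$. So the whole argument splits into two independent pieces: a statement about $\tilde h$, and a transfer lemma through $B$.

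First I would establish that $\tilde h:\bRm\to\bR$, defined by $\tilde h(u)=\sum_{i=1}^m\tilde h_1(u_i)$, is convex and differentiable with a $2$-Lipschitz continuous gradient. Convexity is immediate since $\tilde h$ is a sum of the convex functions $\tilde h_1$ from Lemma \ref{lemma_tildeh1}. Differentiability is coordinatewise, with $\nabla\tilde h(u)=\left(\tilde h_1'(u_1),\ldots,\tilde h_1'(u_m)\right)$. For the Lipschitz bound, I would use that each $\tilde h_1'$ is $2$-Lipschitz (again Lemma \ref{lemma_tildeh1}) to write, for any $u,v\in\bRm$,
$$
\|\nabla\tilde h(u)-\nabla\tilde h(v)\|^2=\sum_{i=1}^m\left|\tilde h_1'(u_i)-\tilde h_1'(v_i)\right|^2\leq 4\sum_{i=1}^m|u_i-v_i|^2=4\|u-v\|^2,
$$
so that $\nabla\tilde h$ is $2$-Lipschitz on $\bRm$ in the $\ell_2$ norm.

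Next I would pass to $f=\tilde h\circ B$. Convexity of $f$ follows from the convexity of $\tilde h$ composed with the linear map $w\mapsto Bw$. By the chain rule, $f$ is differentiable with $\nabla f(w)=B^\top\nabla\tilde h(Bw)$. To obtain the Lipschitz constant, for any $w_1,w_2\in\bR^{m+1}$ I would chain the bounds
$$
\|\nabla f(w_1)-\nabla f(w_2)\|=\left\|B^\top\bigl(\nabla\tilde h(Bw_1)-\nabla\tilde h(Bw_2)\bigr)\right\|\leq\|B\|_2\cdot 2\|Bw_1-Bw_2\|\leq 2\|B\|_2^2\,\|w_1-w_2\|,
$$
using $\|B^\top\|_2=\|B\|_2$, the $2$-Lipschitz property of $\nabla\tilde h$, and $\|Bw_1-Bw_2\|\leq\|B\|_2\|w_1-w_2\|$. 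This yields exactly the claimed constant $2\|B\|_2^2$.

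This argument is essentially routine, so there is no serious obstacle; the only point demanding a little care is the constant bookkeeping in the final chaining, where one must correctly account for the operator norm of $B$ appearing twice (once from $B^\top$ acting on the gradient difference and once from $B$ acting inside the argument), which is precisely what produces the square $\|B\|_2^2$ rather than a single power of $\|B\|_2$.
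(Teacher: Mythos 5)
Your proposal is correct and follows essentially the same route as the paper: both reduce to Lemma \ref{lemma_tildeh1}, prove convexity of $\tilde h$ and transfer it through the linear map $B$, and obtain the constant $2\|B\|_2^2$ via the chain rule $\nabla f(w)=B^\top\nabla\tilde h(Bw)$ together with the componentwise $2$-Lipschitz bound and $\|B^\top\|_2=\|B\|_2$. The only difference is organizational: you isolate the $2$-Lipschitzness of $\nabla\tilde h$ as a separate step, whereas the paper inlines that coordinatewise estimate directly in the chained inequality.
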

\begin{proof}
By Theorem 5.7 of \cite{rockafellar1970convex}, to prove the convexity of $f$, it suffices to show that $\tilde{h}$ is convex. It follows from Lemma \ref{lemma_tildeh1} that $\tilde{h}_1$ is convex and differentiable with a 2-Lipschitz continuous derivative. Using the definition of $\tilde{h}$ and the convexity of $\tilde{h}_1$, for any $u,v\in\bRm$ and $\alpha\in(0,1)$, we have that
\begin{align*}
\tilde{h}((1-\alpha)u+\alpha v)&=\sum_{i=1}^{m}\tilde{h}_1((1-\alpha)u_i+\alpha v_i)\\
&\leq\sum_{i=1}^{m}\left[(1-\alpha)\tilde{h}_1(u_i)+\alpha \tilde{h}_1(v_i)\right]\\
&=(1-\alpha)\tilde{h}(u)+\alpha\tilde{h}(v),
\end{align*}
which implies that $\tilde{h}$ is convex, and hence $f$ is convex.

We next prove that $\nabla f$ is $2\|B\|_2^2$-Lipschitz continuous. By using the chain rule and the Lipschitz continuity of $\tilde{h}_1^{'}$, for any $u,v\in\bR^{m+1}$, we have that
\begin{align*}
\|\nabla f(u)-\nabla f(v)\|&=\|B^\top\nabla\tilde{h}(Bu)-B^\top\nabla\tilde{h}(Bv)\|\\
&\leq\|B^\top\|_2\left[\sum_{i=1}^{m}\left(\tilde{h}_1^{'}((Bu)_i)-\tilde{h}_1^{'}((Bv)_i)\right)^2\right]^{\frac{1}{2}}\\
&\leq 2\|B^\top\|_2\left[\sum_{i=1}^{m}\left((Bu)_i-(Bv)_i\right)^2\right]^{\frac{1}{2}}\\
&=2\|B\|_2\|Bu-Bv\|\\
&\leq 2\|B\|_2^2\|u-v\|,
\end{align*}
which completes the proof.
\end{proof}

We now give the closed form of $\prox_{\mu\|\cdot\|_1\circ\tilde{I}}$.

\begin{proposition}
Suppose that $\tilde{I}$ is defined by \eqref{tildeI} and $\mu\in\bR_+$. Then for $w\in\bR^{m+1}$,
$$
\prox_{\mu\|\cdot\|_1\circ\tilde{I}}(w)=\left[\prox_{\mu|\cdot|}(w_1),\prox_{\mu|\cdot|}(w_2),\cdots,\prox_{\mu|\cdot|}(w_{m}),w_{m+1}\right]^\top.
$$
\end{proposition}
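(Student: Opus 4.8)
The plan is to exploit the special zero structure of $\tilde{I}$ to reduce the $(m+1)$-dimensional proximity computation to $m+1$ independent one-dimensional problems. First I would observe that, since the last row of $\tilde{I}$ in \eqref{tildeI} vanishes, for any $w\in\bR^{m+1}$ we have $\tilde{I}w=[w_1,\ldots,w_m,0]^\top$, and therefore
$$
\|\tilde{I}w\|_1=\sum_{i=1}^{m}|w_i|,
$$
which is independent of the last coordinate $w_{m+1}$. Consequently the function $\psi:=\mu\|\cdot\|_1\circ\tilde{I}$ is fully separable: writing $u=[u_1,\ldots,u_{m+1}]^\top$, it equals $\sum_{i=1}^{m}\mu|u_i|$ with no contribution from $u_{m+1}$.

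Next I would write out the defining minimization of the proximity operator and use this separability. Since both the quadratic coupling term and the penalty decompose as sums over coordinates, the objective splits as
$$
\frac{1}{2}\|u-w\|^2+\psi(u)=\sum_{i=1}^{m}\left[\frac{1}{2}(u_i-w_i)^2+\mu|u_i|\right]+\frac{1}{2}(u_{m+1}-w_{m+1})^2,
$$
in which each summand depends on a single entry of $u$. Minimizing a sum of terms with disjoint variables is the same as minimizing each term separately, so the $i$-th entry of $\prox_\psi(w)$ is obtained by solving the corresponding one-dimensional subproblem.

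Finally I would identify each subproblem. For $i\in\bN_m$ the $i$-th subproblem $\min_{u_i}\left\{\frac{1}{2}(u_i-w_i)^2+\mu|u_i|\right\}$ is exactly the definition of $\prox_{\mu|\cdot|}(w_i)$, while the last subproblem $\min_{u_{m+1}}\frac{1}{2}(u_{m+1}-w_{m+1})^2$ has the unique minimizer $u_{m+1}=w_{m+1}$. Stacking these minimizers yields the asserted closed form. I do not anticipate any genuine obstacle here; the only point requiring a word of justification is the coordinate-wise separation of the proximity operator, which is immediate from the additive decomposition displayed above together with the uniqueness of each minimizer guaranteed by strong convexity of the one-dimensional objectives.
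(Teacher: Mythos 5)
Your proposal is correct and follows exactly the paper's own argument: the paper likewise notes that $\|\tilde{I}v\|_1$ drops the last coordinate, rewrites the defining minimization as a separable sum $\sum_{i=1}^{m}\left(\frac{1}{2}(v_i-w_i)^2+\mu|v_i|\right)+\frac{1}{2}(v_{m+1}-w_{m+1})^2$, and minimizes coordinate-wise to read off the stated closed form. Your added remark about uniqueness via strong convexity of each one-dimensional objective is a fine (if implicit in the paper) touch, but otherwise the two proofs coincide.
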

\begin{proof}
It follows from the definition of proximity operator that
\begin{align*}
\prox_{\mu\|\cdot\|_1\circ\tilde{I}}(w)&=\argmin_{v\in\bR^{m+1}}\left\{\frac{1}{2}\|v-w\|_2^2+\mu\|\tilde{I}v\|_1\right\}\\
&=\argmin_{v\in\bR^{m+1}}\left\{\sum_{i=1}^{m}\left(\frac{1}{2}(v_i-w_i)^2+\mu|v_i|\right)+\frac{1}{2}(v_{m+1}-w_{m+1})^2\right\}\\
&=\left[\prox_{\mu|\cdot|}(w_1),\prox_{\mu|\cdot|}(w_2),\cdots,\prox_{\mu|\cdot|}(w_{m}),w_{m+1}\right]^\top,
\end{align*}
which proves the desired result.
\end{proof}

The proximity operator of $\mu|\cdot|$ is given in \cite{micchelli2011proximity} by
$$
\prox_{\mu|\cdot|}(t)=\max(|t|-\mu,0)\cdot\sign(t),\ \ \mbox{for}\ \ t\in\bR.
$$
Now AFBA for solving model \eqref{SHL-L1-SVM1} can be given by
\begin{equation}\label{alg2}
\begin{cases}
v^k=w^k+\theta_k(w^k-w^{k-1}),\\
w^{k+1}=\prox_{\beta\lambda\|\cdot\|_1\circ\tilde{I}}(v^k-\beta B^\top\nabla\tilde{h}(Bv^k)),
\end{cases}
\end{equation}
where $\beta\in\left(0,\frac{1}{2\|B\|_2^2}\right]$, and $w^0, w^1\in\bR^{m+1}$ are given initial vectors.

\section{Numerical experiments}\label{sec_numerical}
In this section, we present the performance of the proposed momentum scheme by comparing it with momentum schemes \eqref{Nesterovthetak} and \eqref{CDthetak}. The three competing momentum schemes are used in conjunction with AFBA to solve the SHL-$\ell_1$-SVM model. We call setting \eqref{CDthetak} the Chambolle-Dossal (CD) momentum scheme, and setting \eqref{ourthetak} the generalized Nesterov (GN) momentum scheme.

In the numerical experiments, we compare the algorithms on two public datasets from LIBSVM \cite{chang2011libsvm}. The first dataset for the comparison is called ``MNIST01", which comes from MNIST handwritting digit database. We only use the digits ``0" and ``1" for classification, leading to a database with 12665 samples in the training set and 2115 samples in the test set, and each sample has 400 features. The second dataset is ``Splice" with 3175 samples and each sample has 60 features. We use 1000 samples to train the model and consider the rest 2175 samples as the test data. All the numerical experiments are implemented on a personal computer with a 2.90 GHz Intel Core i7 processor, a 16GB DDR4 memory and a NVIDIA GeForce GTX 1660 SUPER GPU.

The Gaussian kernel function $\mK(x,y)=e^{-\gamma\|x-y\|^{2}}$ for $x,y
\in\bRn$ was used in the SHL-$\ell_1$-SVM model. Since $\nabla f$ is $2\|B\|_2^2$-Lipschitz continuous (Proposition \ref{LipschitzConstant}), we set the algorithmic parameter $\beta$ to $\frac{1}{2\|B\|_2^2}$. As for the Gaussian kernel parameter $\gamma$ and the regularization parameter $\lambda$, we fine-tune their optimal values according to the performance of test accuracy. This work focuses on the efficiency comparison of the competing momentum schemes. We thus employ AFBA with the three momentum schemes to solve the same SHL-$\ell_1$-SVM model with the fine-tuned $\gamma$ and $\lambda$. We present in Table \ref{Coeff} the optimal values of $\gamma$ and $\lambda$ for the two datasets.

\renewcommand\arraystretch{1.5}
\begin{table}[h]
\small
\centering
\caption{Optimal values of parameters $\gamma$ and $\lambda$ for the two datasets.}
\label{Coeff}
\begin{tabular}{|c|c|c|}
 \hline Dataset & MNIST01 & Splice\\
\hline Size (training, test) &(12665, 2115) & (1000, 2175)\\
\hline ($\gamma$, $\lambda$) & ($2^{-5}$, $2^{0}$) & ($2^{-5}$, $2^{-7}$)\\
\hline
\end{tabular}
\end{table}

The numerical results consist of two parts. In the first part, we will compare the performance of AFBA with the GN momentum scheme (AFBA-GN) with different $\omega$. In the second part, we show the comparison of FISTA, AFBA with the CD momentum scheme (AFBA-CD) and the proposed AFBA-GN. Three figure-of-merits will be used in the comparison, including the normalized objective function value (NOFV), the training accuracy and the test accuracy. We define the NOFV by
$$
\text{NOFV}(w^{k}):=\frac{F(w^{k})-F_{ref}}{F(w^{0})-F_{ref}},
$$
where $F$ is the objective function, $F_{ref}$ denotes the reference objective function value. We set $F_{ref}$ to the objective function value at $w^{100000}$ obtained by performing 100,000 iterations of FISTA. Note that the competing algorithms have almost the same computational cost in each iteration, hence we evaluate the performance with respect to the iteration number throughout the numerical section.

\begin{figure}[htbp]
\centering
\subfigure[]{
\includegraphics[width=0.47\linewidth]{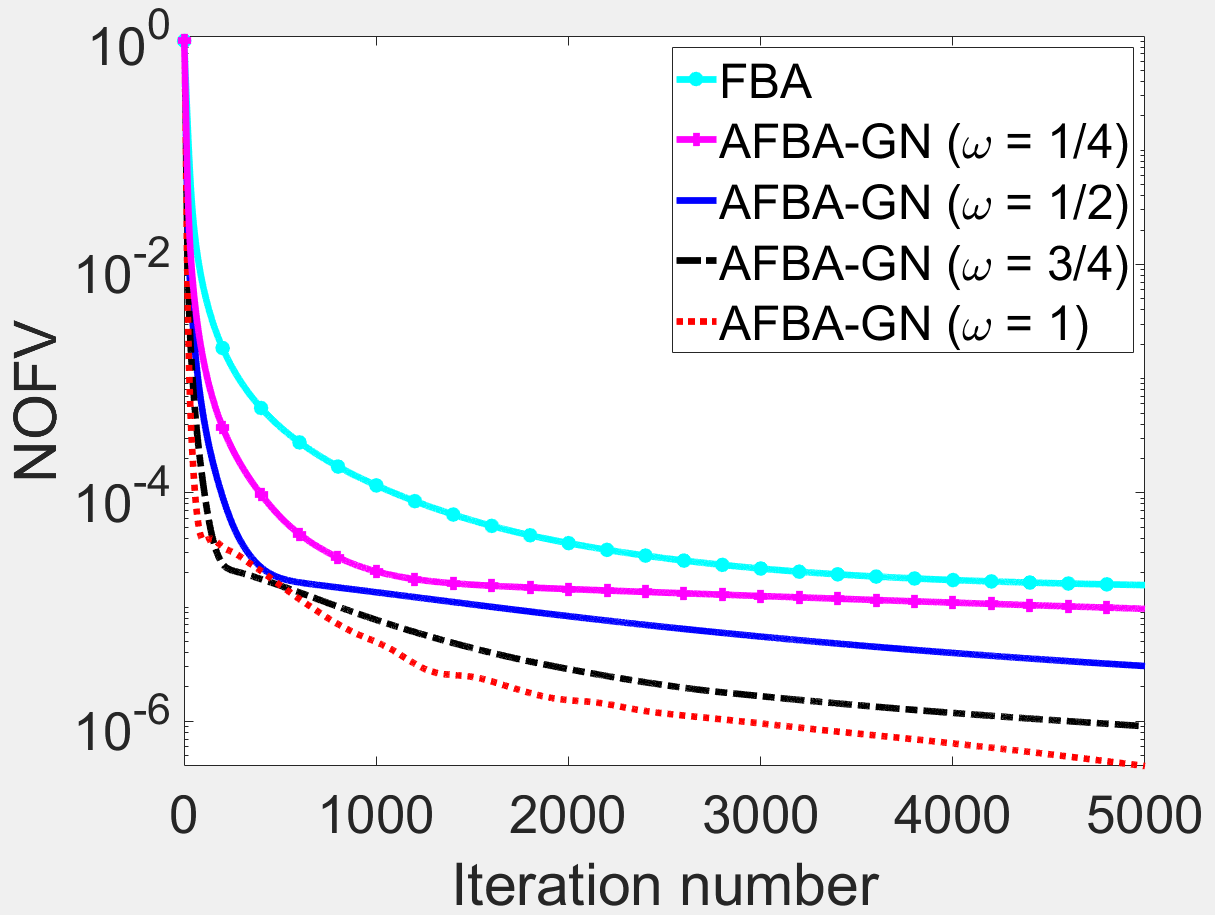}}\\\vspace{-0.5em}
\subfigure[]{
\includegraphics[width=0.47\linewidth]{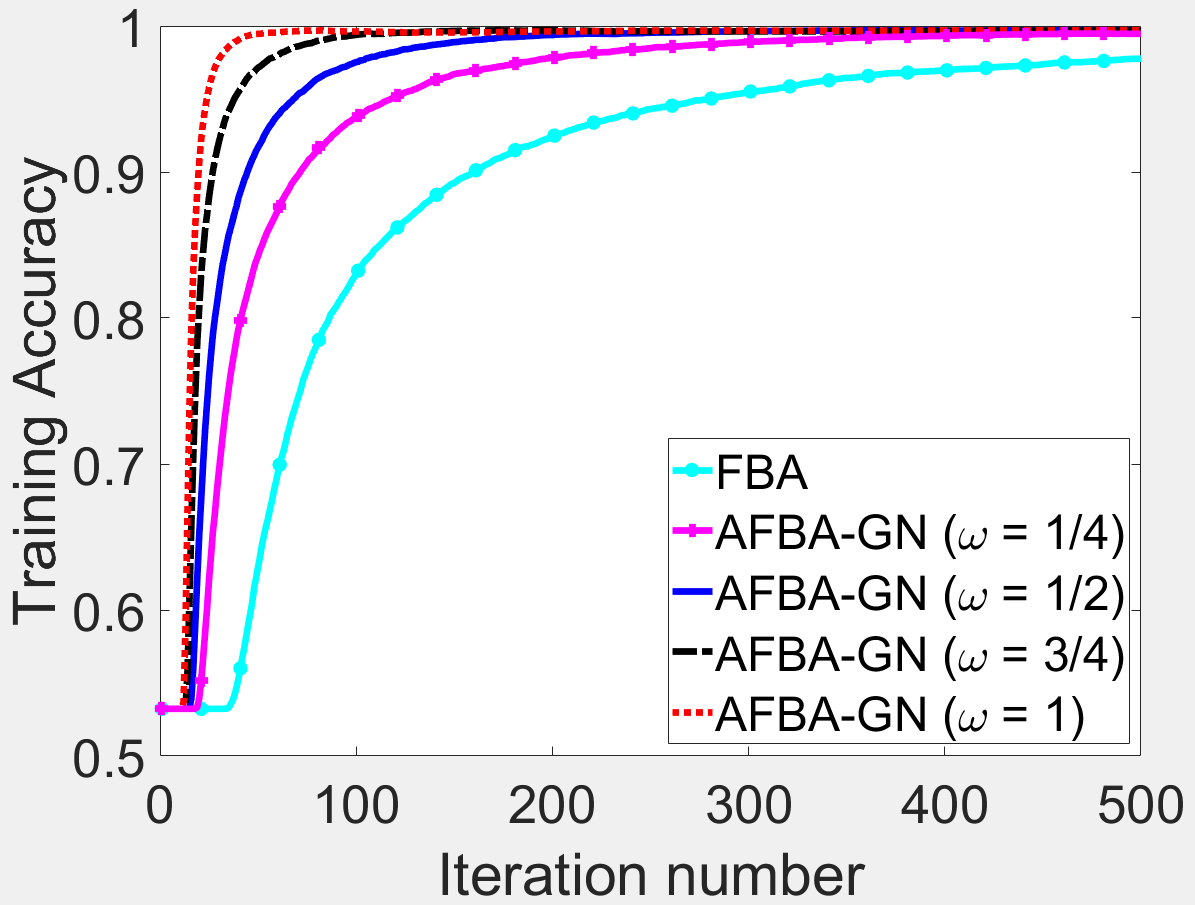}}
\subfigure[]{
\includegraphics[width=0.47\linewidth]{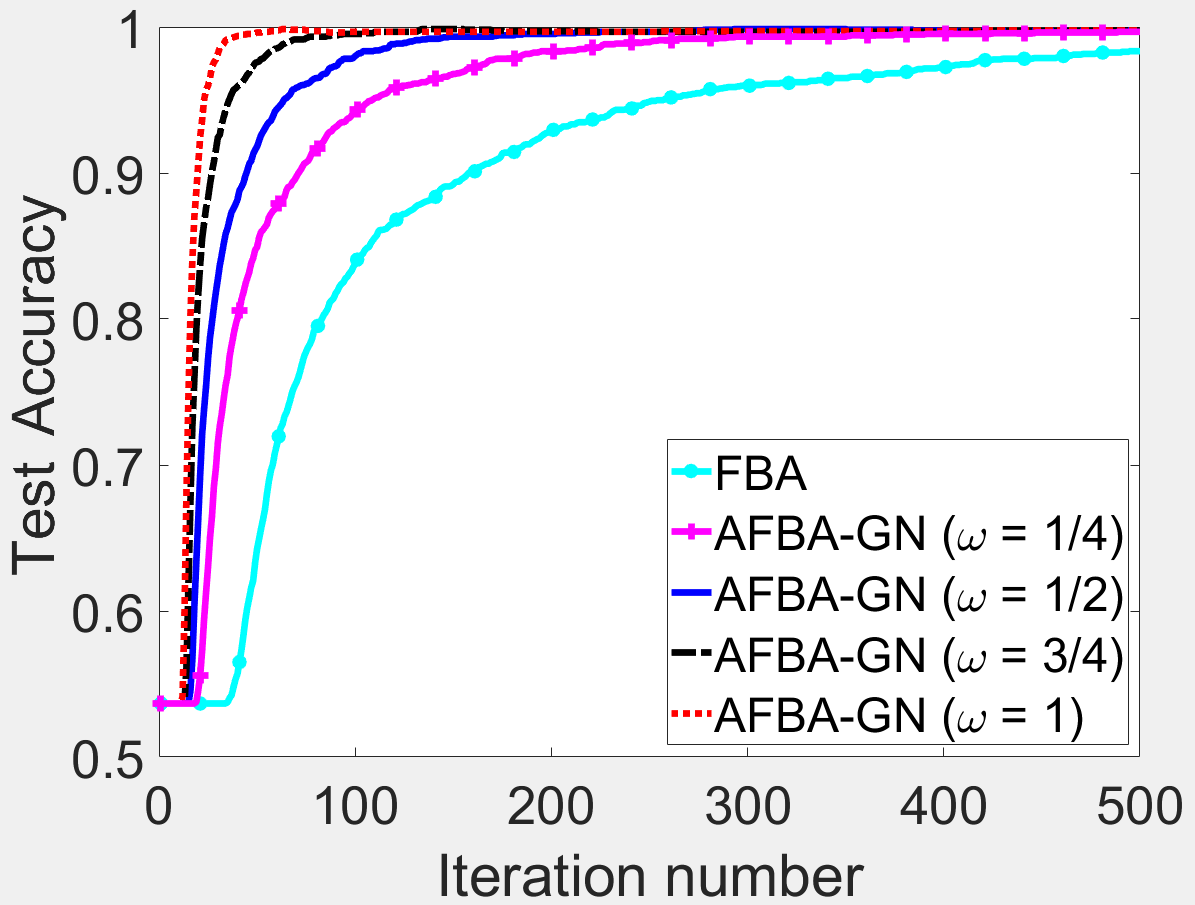}}\vspace{-1em}
\caption{Comparison of FBA and AFBA-GN with $\omega=\frac{1}{4},\frac{1}{2},\frac{3}{4},1$ using ``MNIST01": (a) normalized objective function value versus iteration number; (b) training accuracy versus iteration number; (c) test accuracy versus iteration number.}
\label{fig1}
\end{figure}

In the first experiment, we evaluate the performance of FBA and AFBA-GN with $\omega=\frac{1}{4},\frac{1}{2},\frac{3}{4},1$ in terms of NOFV, training accuracy and test accuracy for the classification of dataset ``MNIST01". In AFBA-GN, we set $a$ and $b$ to $\frac{1}{2.01}$ and 1, respectively, for all cases of $\omega$. From Figure \ref{fig1} (a), (b) and (c), we see that AFBA-GN converges more rapidly than FBA in terms of all the three figure-of-merits. Moreover, larger $\omega$ ($t_{k-1}$ of higher order) gives faster convergence, which is consistent with the convergence rate results in Theorem \ref{mainthm}.

The second experiment presents the behavior of FISTA, AFBA-CD and AFBA-GN with $\omega=1$ and two different sets of $a$, $b$. For parameter $\alpha\in(3,+\infty)$ in AFBA-CD, we empirically found that smaller $\alpha$ gives faster convergence. Therefore, we set $\alpha=3.01$. According to the observation from Figure \ref{fig1}, we always set $\omega=1$ for AFBA-GN. The two sets of parameters $a$, $b$ for comparison are given by $a=\frac{1}{4}$, $b=0$ and $a=\frac{1}{2.01}$, $b=5$, respectively. We recall that the CD momentum scheme is a special case of the GN momentum scheme with $a=\frac{1}{\alpha-1}$, $b=1$, and we can see that $a=\frac{1}{2.01}$ in the second set is consistent with the value of $\alpha$ in AFBA-CD. We empirically found that for the three competing momentum accelerated algorithms with almost the same convergence rate, faster convergence of NOFV may not lead to faster convergence of training and test accuracies (see Figure \ref{fig2}). As a result, the first set of parameters $a$, $b$ was determined based on the best performance of NOFV, while the second set was determined based on the best performance of training and test accuracies.

\begin{figure}[htbp]
\centering
\subfigure[]{
\includegraphics[width=0.5\linewidth]{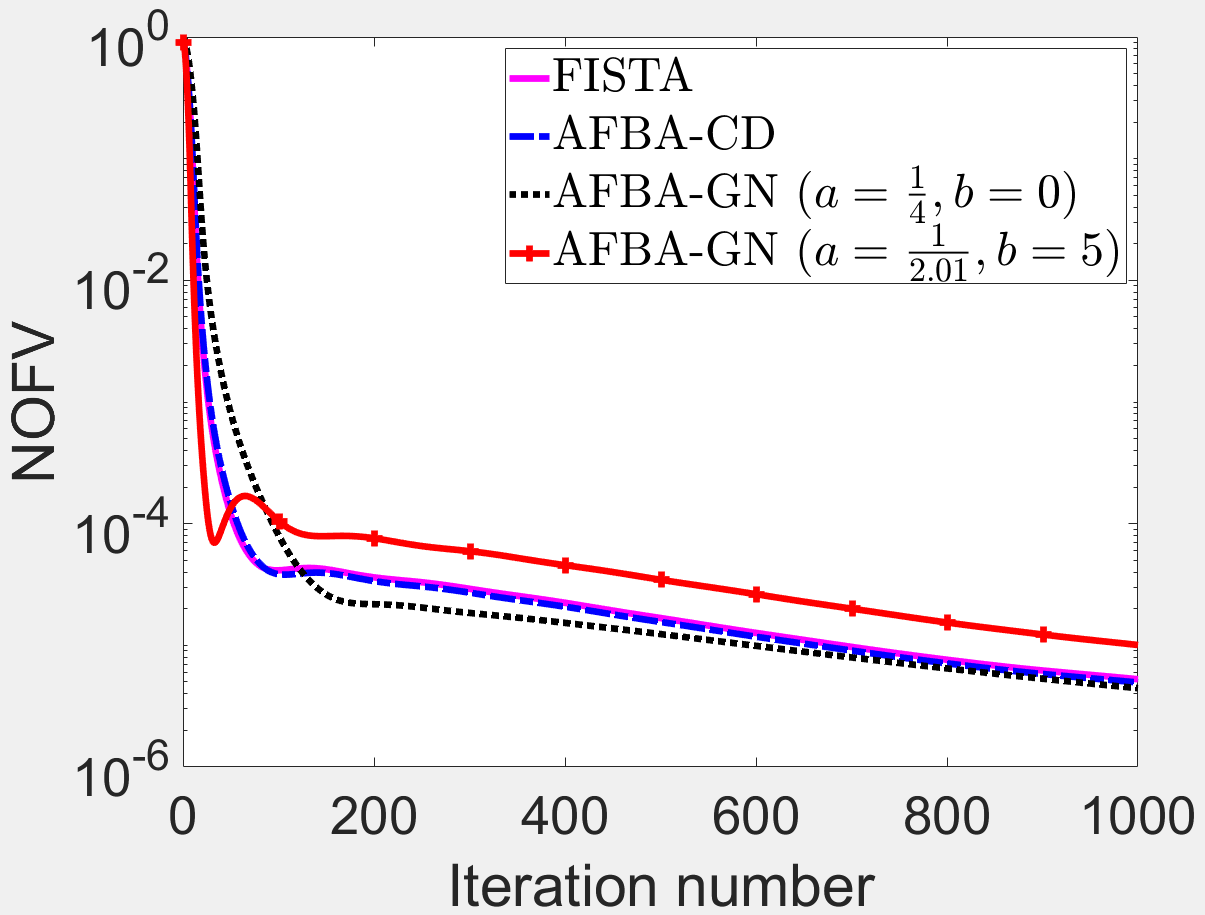}}\\\vspace{-0.5em}
\subfigure[]{
\includegraphics[width=0.48\linewidth]{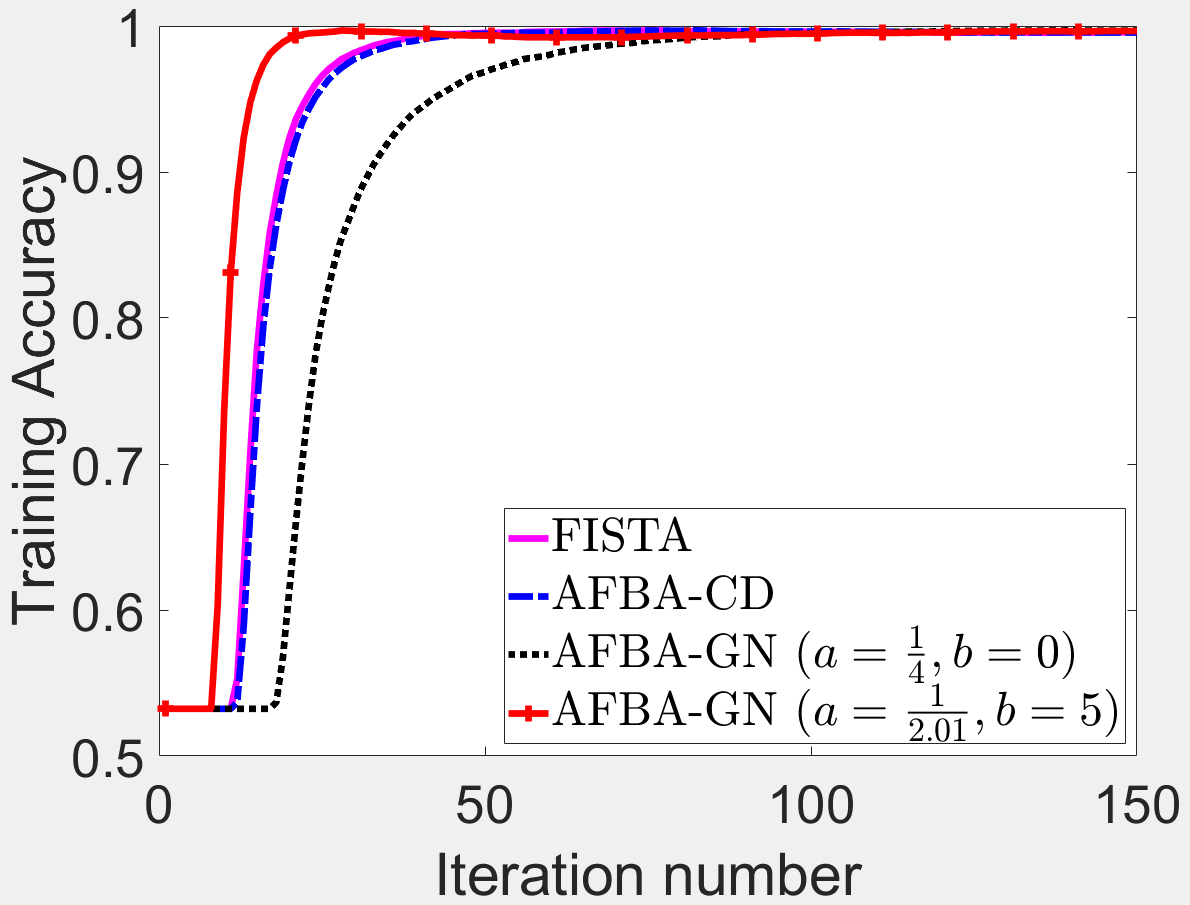}}
\subfigure[]{
\includegraphics[width=0.48\linewidth]{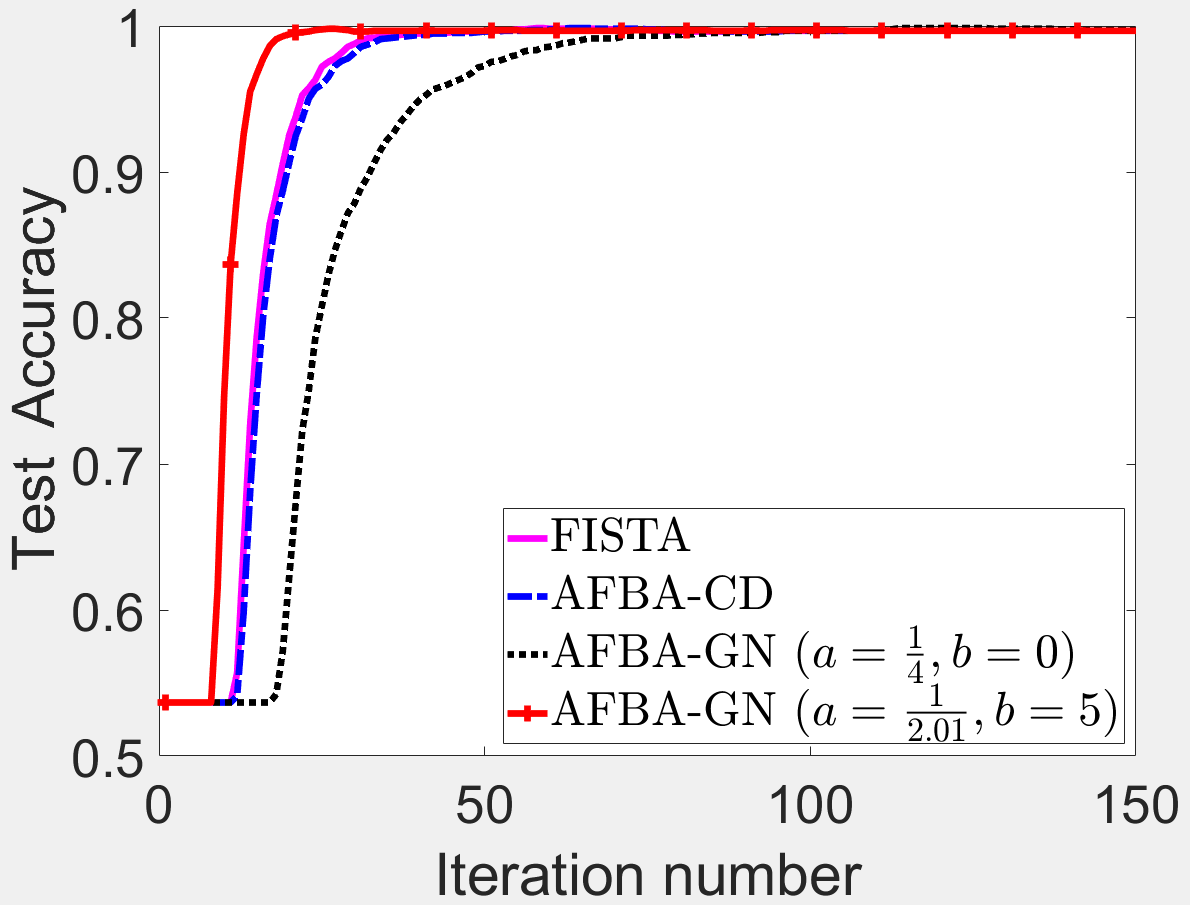}}\vspace{-1em}
\caption{Comparison of FISTA, AFBA-CD, AFBA-GN $(a=\frac{1}{4},b=0)$ and AFBA-GN ($a=\frac{1}{2.01}, b=5$) using ``MNIST01": (a) normalized objective function value versus iteration number; (b) training accuracy versus iteration number; (c) test accuracy versus iteration number.}
\label{fig2}
\end{figure}

As shown in Figure \ref{fig2}, we observe that the plots of FISTA and AFBA-CD almost coincide in terms of all the three figure-of-merits. Figure \ref{fig2} (a) shows that the NOFV plot of AFBA-GN $(a=\frac{1}{2.01},b=5)$ converges the fastest at the early iterations, but is then followed by a mild oscillation. After that, this plot goes a little higher than that of the other algorithms. AFBA-GN $(a=\frac{1}{4},b=0)$ converges the slowest at the early iterations, but achieves the lowest NOFV in subsequent iterations. In fact, we can see that all the plots of the competing algorithms in Figure \ref{fig2} (a) have almost the same convergence speed in the later iterations. The performances of the competing algorithms in terms of training accuracy and test accuracy are shown in Figure \ref{fig2} (b) and (c), respectively. Among these algorithms, AFBA-GN $(a=\frac{1}{2.01},b=5)$ achieves both high training and test accuracies faster than the other algorithms.

In machine learning problems, a model with high training accuracy is prone to overfitting to the training data. As a result, we often care more about test accuracy when a high enough training accuracy is attained. To better evaluate the behaviors of the competing algorithms in terms of test accuracy, we list in Table \ref{AccIterNum} the number of iterations required to achieve various desired levels of test accuracy. We remark that when the number of iterations required is larger than 100,000, it will be marked by `-' in the table. This phenomenon appears in obtaining a 99.9\% test accuracy by FBA due to its slowness. From this table, we are able to see that AFBA-GN $(a=\frac{1}{2.01},b=5)$ can achieve desired levels of test accuracy at about half of the iterations, comparing to FISTA and AFBA-CD.

\begin{figure}[htbp]
\centering
\subfigure[]{
\includegraphics[width=0.47\linewidth]{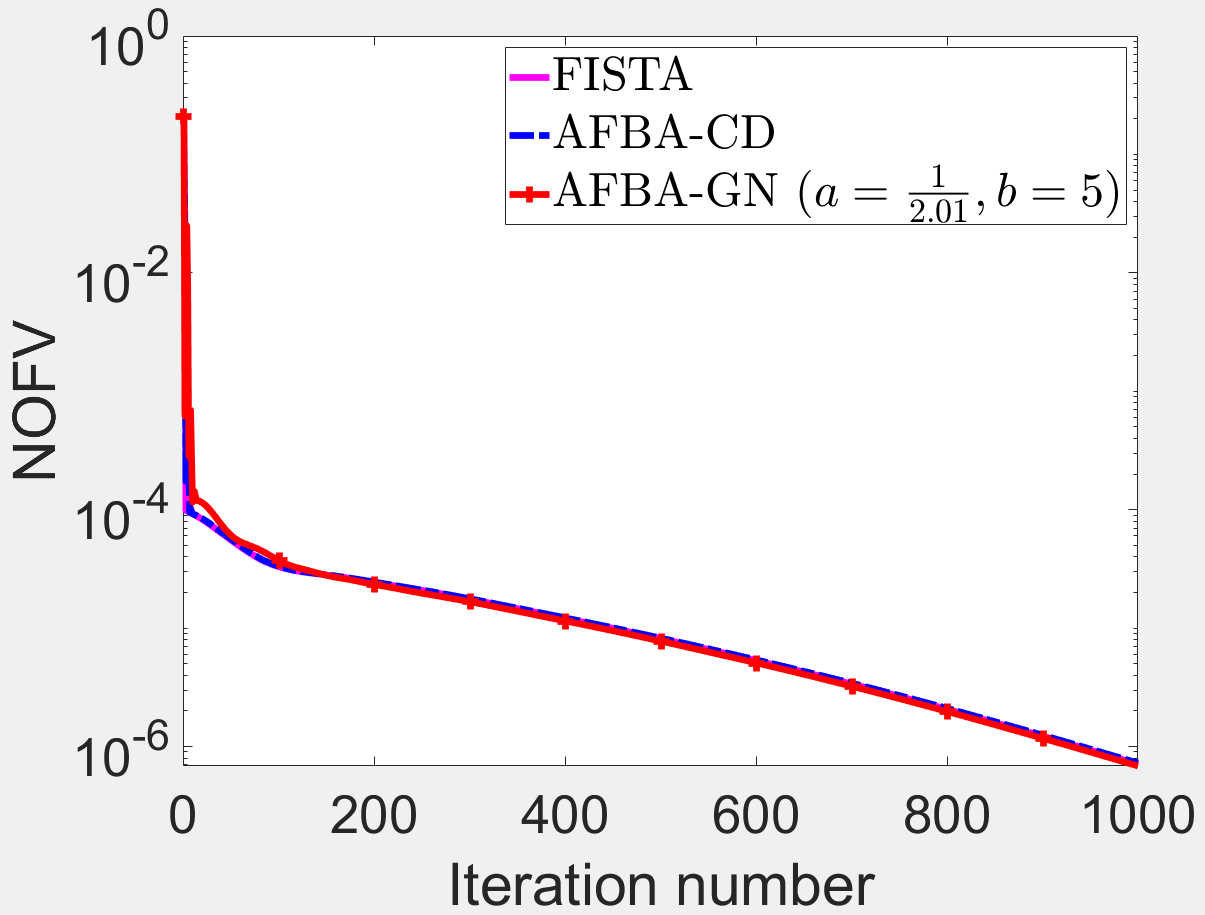}}\\\vspace{-0.5em}
\subfigure[]{
\includegraphics[width=0.47\linewidth]{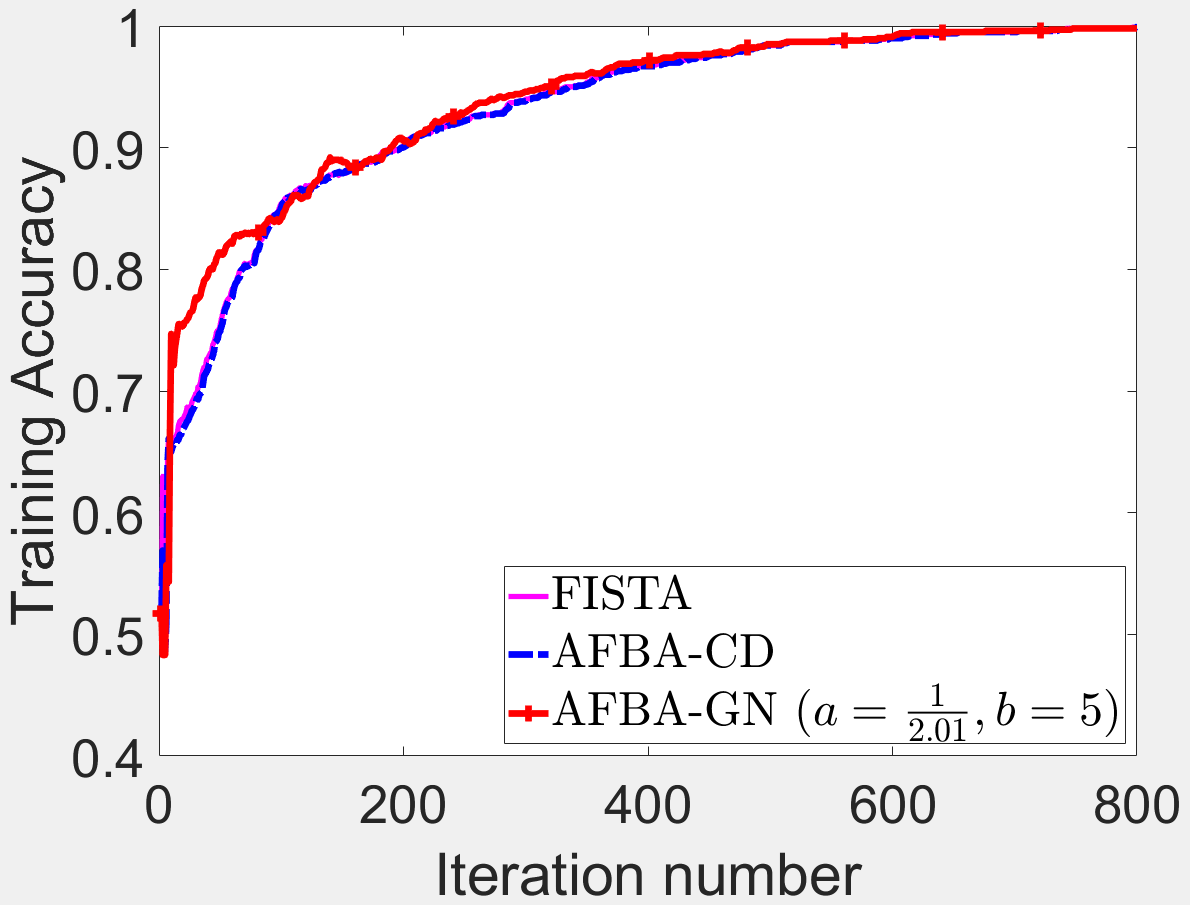}}
\subfigure[]{
\includegraphics[width=0.47\linewidth]{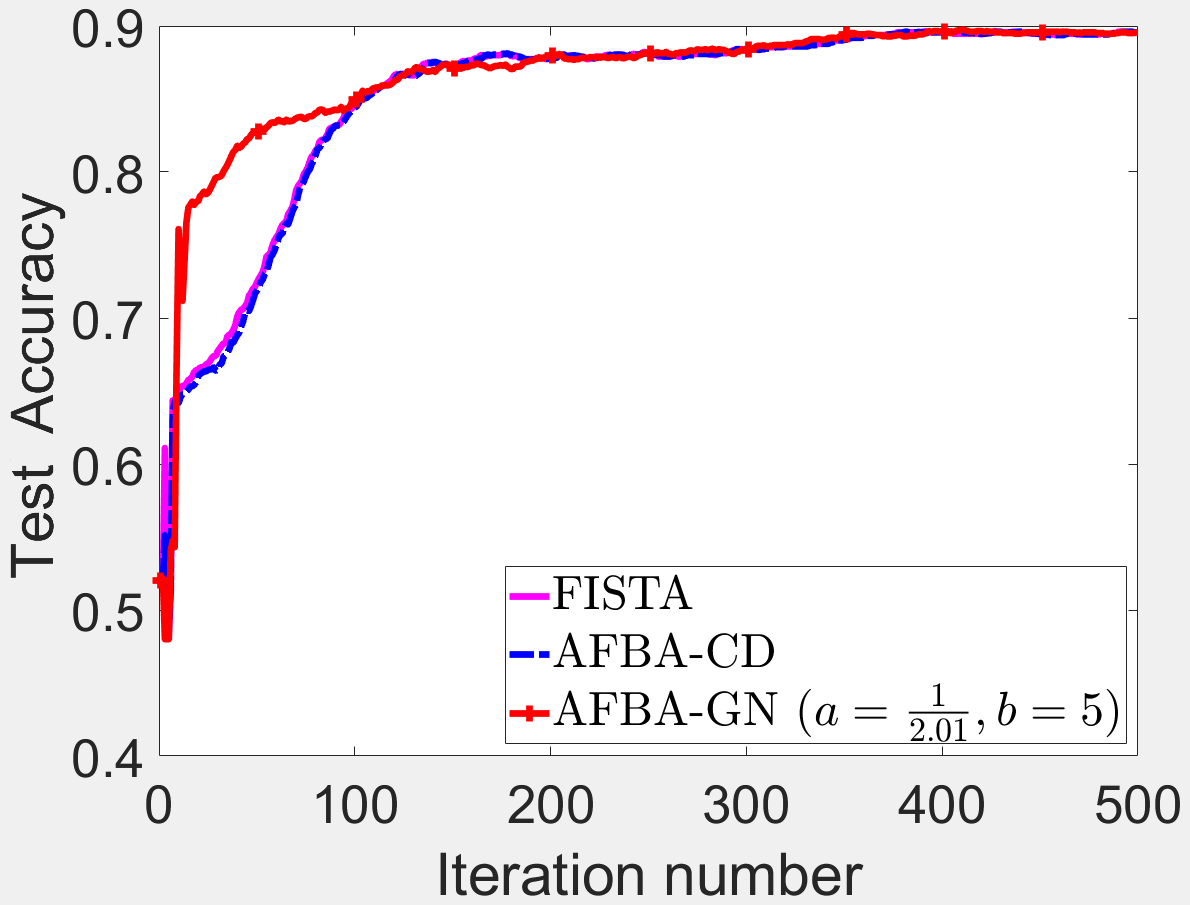}}\vspace{-1em}
\caption{Comparison of FISTA, AFBA-CD and AFBA-GN ($a=\frac{1}{2.01}, b=5$) using ``Splice": (a) normalized objective function value versus iteration number; (b) training accuracy versus iteration number; (c) test accuracy versus iteration number.}
\label{fig3}
\end{figure}

\renewcommand\arraystretch{1.3}
\newcommand{\tabincell}[2]{\begin{tabular}{@{}#1@{}}#2\end{tabular}}
\begin{table}[h]
\footnotesize
\centering
\caption{The number of iterations required to achieve the desired levels of test accuracy.}
\label{AccIterNum}
\begin{tabular}{|c|c|c|c|c|c|c|c|}
\hline
\diagbox[width=2.6cm,height=0.8cm]{Algorithm}{Accuracy} & 90\% & 95\% & 97\% & 99\% & 99.5\% & 99.7\% & 99.9\%\\
\hline
FBA & 158 & 258 & 385 & 661 & 1100 & 1795 & -\\
\hline
FISTA & 19 & 22 & 25 & 31 & 42 & 51 & 1259\\
\hline
AFBA-CD & 20 & 23 & 27 & 34 & 45 & 57 & 1265\\
\hline
\tabincell{c}{AFBA-GN \\\footnotesize{$(a=\frac{1}{2.01},b=5)$}} & {\bf13} & {\bf14} & {\bf16} & {\bf18} & {\bf21} & {\bf24} & {\bf620}\\
\hline
\end{tabular}
\end{table}

Finally, we show in Figure \ref{fig3} the performances of FISTA, AFBA-CD and AFBA-GN $(a=\frac{1}{2.01},b=5)$ for the classification of the other dataset ``Splice". In this experiment, the three competing algorithms have almost the same convergence speed in terms of NOFV. For the training and test accuracies, AFBA-GN $(a=\frac{1}{2.01},b=5)$ still outperforms the other two algorithms.

\section{Conclusion}\label{sec_conclusion}
This paper proposes a generalized Nesterov (GN) momentum scheme for the accelerated forward-backward algorithm (AFBA). We prove the convergence of the iterative sequence generated by AFBA with the GN momentum scheme (AFBA-GN). Moreover, we show that AFBA-GN has an $o\left(\frac{1}{k^{2\omega}}\right)$ convergence rate in terms of the function value, and an $o\left(\frac{1}{k^\omega}\right)$ convergence rate in terms of the distance between consecutive iterates, where $\omega\in(0,1]$ is a power parameter introduced in the GN momentum scheme. The generality of the proposed momentum scheme provides a wider class of momentum algorithms with various convergence rates depending on $\omega$. The specific class of GN momentum scheme with $\omega=1$ is still more general than the existing Chambolle-Dossal (CD) momentum scheme, which may lead to superior performance in various scenarios. In the numerical experiments on support vector machine, the performances of the GN scheme in terms of various figure-of-merits can be optimized via fine-tuning the momentum parameters. The results demonstrate that the GN scheme outperforms the Nesterov and the CD momentum schemes. This shows that AFBA with the GN momentum scheme has great potential for classification problems.

\bibliographystyle{siamplain}
\bibliography{bibfile}



\end{document}